\newtheorem{theorem}{Theorem}[section]
\newtheorem{lemma}[theorem]{Lemma}
\newtheorem{proposition}[theorem]{Proposition}
\theoremstyle{remark}
\newcommand{\hq } {{/\kern -.185em/}}
\newcommand{\he } {{\kern -.050em\sim _H \kern -.050em }}
\newcommand{\eq } {{\kern -.100em\sim \kern -.100em}}
\newcommand{\eqs } {{\kern -.100em\sim }}
\title[Homogeneous manifolds sastisfying
Levi conditions]{Remarks on homogeneous complex manifolds satisfying 
Levi conditions}
\thanks{The author's research for this article was partially supported by
SFB/TR 12 and Schwerpunkt \emph{Representation Theory} of the DFG}
\author{Alan Huckleberry}
\address{Fakult\"at f\"ur Mathematik, Ruhr-Universit\"at Bochum,\\ 
Universit\"atsstra\ss e 150, 
D-44801 Bochum, Germany, and \newline
School of science and engineering, Jacobs University,
Bremen Compus Ring 1,\newline D-28759 Bremen, Germany}
\email{ahuck@cplx.ruhr-uni-bochum.de}
\begin{document}
\maketitle
\begin {abstract}
Homogeneous complex manifolds satisfying various types of Levi
conditions are considered.  Classical results which were of
particular interest to Andreotti are recalled. Convexity
and concavity properties of flag domains are discussed in some
detail. A precise classification of pseudoconvex flag domains
is given. It is shown that flag domains which are in a certain
sense generic are pseudoconcave.
\end {abstract}

\section {Introduction}\label {introduction}
In the early 1960's Andreotti devoted a great deal of his
attention to complex spaces satifying various types of
Levi-conditions. Major works in this direction include his 
description of the field of meromorphic
functions on a pseudoconcave complex space (\cite {A}), showing that 
it is an algebraic function field of transcendence degree at
most the dimension of the space, and his fundamental work
with Grauert (\cite{AGr1}) on finiteness and vanishing of
cohomology on $q$-convex spaces.  At the time
the case of spaces possessing strictly plurisubharmonic exhaustions,
or, under further assumptions, exhaustions having only a
semipositive Levi-form, were well-understood.
It was indeed quite natural to initiate a study of manifolds which
can be viewed as lying between compact and Stein.

Coming from algebraic geometry Andreotti was interested in 
the examples of such manifolds which are obtained by
removing high codimensional subvarieties from (compact)
projective algebraic manifolds. If $Z$ is compact and $E$ is
the subvariety which is removed, then the set of algebraic cycles
contained in $X:=Z\setminus E$, an open set in the Chow variety
of $X$, is of basic importance.  Transferring cohomology from
the pseudoconcave, $q$-pseudoconvex space $X$ to the level
of functions on its cycle space was the topic of his basic joint
works with Norguet (\cite {AN1,AN2}).  In $\S\ref{pseudoconcave flag domains}$
of this note we underline another setting, that of flag domains,
where cycle spaces, pseudoconcavity and $q$-convexity 
go hand-in-hand.  Our research in this area (see, e.g., \cite {FHW})
strongly relies on Barlet's methods for constructing and dealing with
cycle spaces (see, e.g.,\cite{Ba}), and there is no doubt that
Andreotti's viewpoints on this subject were among the inspiring
factors for Barlet's early works.

Andreotti was well-acquainted with the method of discrete group
quotients for constructing projective or quasi-projective varieties
and was particularly interested in such quotients which arise
in moduli problems. In a jewel which is perhaps only known to
specialists (\cite {AGr2}) he and Grauert introduced the notion
of a \emph{pseudoconcave discrete group action} which is the
appropriate translation to the level of $D$ of the notion that
the discrete group quotient $D/\Gamma $ is pseudoconcave. As
an example they showed that the quotient of the Siegel upper half-plane by
the modular group is pseudoconcave and as a consequence that
interesting spaces of automorphic forms are finite-dimensional.
Borel, who took this result to its appropriate 
level of generality (\cite {Bo}), once enthusiastically recalled to us 
how struck he was with the beauty of this simple idea.

Of course it was clear to Andreotti that the notions of 
pseudoconcavity and/or mixed Levi-conditions are of basic
importance, and that one should employ these as Leitf\"aden for
discovering and analyzing interesting new classes of manifolds.
He also knew that it makes sense to involve symmetry at least in the
initial stages of such considerations.  Thus it should come as no surprise
that at the end of his Stanford course on several complex 
variables, which covered most of the topics mentioned above,
he asked the student who was responsible for the notes 
to look for new classes of pseudoconcave manifolds.  
Typically he suggested an extremely simple starting point: 
complex Lie groups.

The present note is organized as follows.  In $\S\ref{lie groups}$
we discuss the case of Lie groups.  It turns out that one easily sees
that the only such manifolds which are pseudoconcave 
are compact tori. However, this result is not as negative as 
first meets the eye, because in the process of considering candidates
for pseudoconcave Lie groups one meets Levi-degenerate, pseudoconvex 
manifolds
where first examples of interesting, number-theoretic guided
foliations play a role. We continue the discussion of analogous
pseudoconvexity phenomena for nilmanifolds in $\S\ref{nilmanifolds}$
and flag domains in $\S\ref{flag domains}$.  In 
Theorem \ref{pseudoconvex} we show in particular that the 
Remmert reduction of pseudoconvex flag domain is a precisely
defined homogeneous bundle over a Hermitian symmetric space. 

Our work in $\S\ref{pseudoconcave flag domains}$ is devoted to
a study of pseudoconcave flag domains. We suspect that virtually
all flag domains which are not pseudoconvex are in fact
pseudoconcave, but at the present time we are only able to
prove this for flag domains which are in a certain sense
generic ($\S\ref{pseudoconcavity}$). These include 
period domains for moduli problems, e.g., for marked K3-surfaces, 
which were certainly of interest to Andreotti.  
\section {Pseudoconvexity}
As mentioned above, our original starting point was to determine
if there are interesting pseudoconcave complex Lie groups.  In the
first section here we pursue this as a guideline, but in 
fact end up showing that Lie groups are more interesting from the 
point of view of pseudoconvexity. 
In the following paragraph we prove analogous results
for homogeneous nilmanifolds.  The final paragraph is devoted to
a detailed description of pseudoconvex flag domains.  The results
on complex Lie groups are classical, but the proofs given here
underline the importance of Levi-foliations, a theme that is
of recent interest and which also plays a role in our discussion 
of nilmanifolds. Although new, the results on nilmanifolds 
only require implementation of classically known information, 
in particular a basic fact due to Loeb (\cite {L}) concerning 
the relation of geodesic convexity and Levi-pseuodoconvexity in a 
Lie group setting. Our characterization of pseudoconvex flag domains 
utilizes the notion of cycle connectivity which is the
flag domain analog of the condition of 
\emph{rationally connected} in algebraic geometry.

Before going further let us recall that by definition a 
(connected) pseudoconcave complex
manifold $X$ contains a relatively compact open set $Z$ so that
for every point $p\in \mathrm {cl}(Z)$ there is a holomorphic
mapping $\psi :\Delta \to \mathrm {cl}(Z)$ of the unit disk $\Delta $ 
in the complex plane with $\psi (0)=p$ and 
$\psi (\mathrm {bd}(\Delta ))\subset Z$.  The following is the
first basic property of these manifolds.
\begin {proposition}
Pseudoconcave manifolds possess only constant holomorphic
functions.
\end {proposition}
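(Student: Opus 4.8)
The plan is to combine the one–variable maximum principle with the identity theorem, using the analytic disks provided by the definition of pseudoconcavity to convert a maximum attained on the boundary of $Z$ into one attained at an interior point. Let $f$ be a holomorphic function on $X$, and let $Z\Subset X$ be the relatively compact open set from the definition of pseudoconcavity. Since $\mathrm{cl}(Z)$ is compact, $|f|$ attains its maximum $M:=\max_{\mathrm{cl}(Z)}|f|$ at some point $p\in \mathrm{cl}(Z)$. A priori $p$ may lie in $\mathrm{bd}(Z)$ rather than in the open set $Z$, so the classical maximum principle cannot be applied directly at $p$; this is exactly the gap that pseudoconcavity is designed to close.

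First I would feed $p$ into the definition: choose a holomorphic $\psi:\Delta\to\mathrm{cl}(Z)$ with $\psi(0)=p$ and $\psi(\mathrm{bd}(\Delta))\subset Z$. Then $f\circ\psi$ is holomorphic on $\Delta$ with $|f\circ\psi|\le M$ on $\Delta$ and $|(f\circ\psi)(0)|=M$, so by the maximum modulus principle $f\circ\psi$ is constant, equal to $f(p)$. Consequently, for any $\zeta\in\mathrm{bd}(\Delta)$ the point $q:=\psi(\zeta)$ lies in $Z$ — which is open in $X$ — and satisfies $|f(q)|=M$. (If $\psi$ is itself constant, then already $p=q\in Z$.)

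Now $|f|$ attains its maximum over the compact set $\mathrm{cl}(Z)$ at the interior point $q\in Z$, so $|f|$ has a local maximum at $q$; the maximum principle for holomorphic functions then forces $f$ to be constant, equal to $f(p)$, on some open neighborhood $U$ of $q$. Since $X$ is connected and $f-f(p)$ is holomorphic on $X$ and vanishes identically on the nonempty open set $U$, the identity theorem yields $f\equiv f(p)$ on all of $X$. Thus every holomorphic function on $X$ is constant.

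The argument is otherwise routine, and I expect the only genuinely load-bearing step to be the second one: producing, from the disk through $p$, an honest interior point $q$ of $\mathrm{cl}(Z)$ at which $|f|$ still attains the value $M$. Everything before it is a compactness remark and everything after it is standard complex analysis (maximum principle plus identity theorem); the hypothesis of pseudoconcavity enters precisely — and only — at that point.
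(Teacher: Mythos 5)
Your proposal is correct and follows essentially the same route as the paper: take the maximum of $|f|$ on $\mathrm{cl}(Z)$, use the analytic disk through the maximizing point together with the maximum modulus principle to relocate the maximum to an interior point of the open set $Z$, and then conclude by the maximum principle (plus the identity theorem, which the paper leaves implicit in its second application of the maximum principle). No gaps; your version merely spells out the final local-constancy and connectedness step in more detail.
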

\begin {proof}
Let $f\in \mathcal O(X)$ and note that the restriction of
$\vert f\vert $ to $\mathrm {cl}(Z)$ takes on its maximum
at some point $p$.  Since $p$ is contained in a holomorphic
disk $\psi (\Delta )$ whose boundary lies in $Z$, the
maximum principle implies that $f$ is constant on that
disk and therefore takes on its maximum at a point of the
open set $Z$.  Another application of the maximum principle
implies that $f$ is constant on $X$.
\end {proof}
\subsection {Complex Lie groups}\label{lie groups}
If $G$ is a connected complex Lie group with
$\mathcal O(G)\cong \mathbb C$, then there is no
nonconstant holomorphic homomorphism 
$G\to \mathrm {GL}_\mathbb C(V)$ to the general linear
group of a complex vector space.
This is due to the fact that $\mathrm {GL}_\mathbb C(V)$ is
an open subset of $\mathrm {End}(V)\cong V\otimes V^*$ which
is itself a complex vector space and is in particular
holomorphically separable.  Let $\mathfrak g$ denote the
Lie algebra of $G$ and recall that the adjoint representation
$\mathrm {Ad}:G\to \mathrm {GL}_\mathbb C(\mathfrak g)$
is exactly such a map. Since in general the kernel of 
this map is the center of $G$, we have the following first
remark. 
\begin {proposition} A connected complex Lie
group $G$ with $\mathcal O(G)\cong \mathbb C$ is Abelian.
\end {proposition}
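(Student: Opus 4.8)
The plan is to run directly with the adjoint representation, as the discussion above has set up. Since $G$ is a \emph{complex} Lie group, the map $\mathrm{Ad}\colon G\to \mathrm{GL}_{\mathbb C}(\mathfrak g)$ is holomorphic, and $\mathrm{GL}_{\mathbb C}(\mathfrak g)$ is an open subset of the complex vector space $\mathrm{End}(\mathfrak g)\cong \mathfrak g\otimes \mathfrak g^*$. Composing $\mathrm{Ad}$ with any linear coordinate function on $\mathrm{End}(\mathfrak g)$ yields an element of $\mathcal O(G)\cong \mathbb C$, hence a constant; letting the coordinate vary, $\mathrm{Ad}$ itself is constant. Because $\mathrm{Ad}(e)=\mathrm{id}_{\mathfrak g}$, this forces $\mathrm{Ad}$ to be the trivial homomorphism $g\mapsto \mathrm{id}_{\mathfrak g}$. (Equivalently, one may invoke the principle already recorded above: $\mathcal O(G)\cong \mathbb C$ admits no nonconstant holomorphic map from $G$ to a holomorphically separable complex manifold, and apply it to $\mathrm{Ad}$.)

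Next I would identify $\ker(\mathrm{Ad})$ with the center $Z(G)$. If $\mathrm{Ad}(g)=\mathrm{id}_{\mathfrak g}$, then the inner automorphism $c_g\colon h\mapsto ghg^{-1}$ has differential $\mathrm{id}$ at $e$; since $c_g$ and $\mathrm{id}_G$ are homomorphisms of the connected group $G$ agreeing to first order at $e$, they coincide, so $g\in Z(G)$. The reverse inclusion is immediate. Combining the two steps, $G=\ker(\mathrm{Ad})=Z(G)$, which is exactly the assertion that $G$ is Abelian.

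There is no real obstacle here beyond assembling standard facts; the only points that genuinely use the hypotheses are the holomorphy of $\mathrm{Ad}$ (which needs $G$ to be a complex Lie group) and the holomorphic separability of $\mathrm{GL}_{\mathbb C}(\mathfrak g)$ together with $\mathcal O(G)\cong \mathbb C$ (which is what kills $\mathrm{Ad}$). The mild care required is in noting that a \emph{constant} homomorphism must take the value $\mathrm{id}_{\mathfrak g}$, and that connectedness of $G$ is what makes $\ker(\mathrm{Ad})$ the full center rather than merely the centralizer of the identity component.
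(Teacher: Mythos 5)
Your argument is exactly the one the paper uses: kill the holomorphic homomorphism $\mathrm{Ad}\colon G\to \mathrm{GL}_{\mathbb C}(\mathfrak g)$ via the holomorphic separability of $\mathrm{GL}_{\mathbb C}(\mathfrak g)\subset \mathrm{End}(\mathfrak g)$ together with $\mathcal O(G)\cong\mathbb C$, then identify $\ker(\mathrm{Ad})$ with the center of the connected group $G$. Your write-up just makes explicit the details (constancy via coordinate functions, kernel equals center) that the paper leaves implicit; it is correct.
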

In particular, if $G$ is pseudoconcave, then it is Abelian.

\subsubsection {Background on Abelian Lie groups}
Let us investigate the case of a complex Abelian Lie group
more closely. Recall that
the exponential map $\mathrm {exp}:\mathfrak g\to G$ of
a (connected) Abelian Lie group is a surjective 
homomorphism from the additive group $(\mathfrak g,+)$.
In the case of a complex Lie group the exponential map
is holomorphic and therefore such groups are of the form $V/\Gamma $,
where $\Gamma $ is a discrete additive subgroup of a 
complex vector space $V$.

If $\mathcal O(G)\cong \mathbb C$, then $\Gamma $ must be
rather large.  A key object for understanding its size and
position with respect to the linear complex structure
of $V$ is the real subspace 
$\mathrm {Span}_\mathbb R(\Gamma)=:V_\Gamma $.

Every connected (not necessicarily complex) Lie group $G$ possesses
a compact subgroup $K$ having the property that $G/K$ is 
diffeomorphic to a vector space. One shows that such groups
are maximal compact subgroups and that any two are conjugate.  
If $G=V/\Gamma $ is Abelian, then the torus $V_\Gamma/\Gamma =:=K$
is compact and $G/K$ is diffeomorphic to any (real) subspace of
$V$ which is complementary to $V_\Gamma $. Thus $K$ is the
unique maximal compact subgroup of $G$.

Observe that if $U_2$ is a complex subspace of $V$ which is 
complementary to $U_1:=V_\Gamma +iV_\Gamma$, then $G$ is holomorphically
isomorphic to $U_1/\Gamma \times U_2$.  Thus it is enough to
consider the situation where $\Gamma $ generates $V$ as a complex
vector space, i.e., where $\mathrm {Span}_\mathbb C(\Gamma )=V$.

Define the additive complex subgroup $W_\Gamma $ of $V$ to be
the maximal complex subgroup in $V_\Gamma $, in other words
$W_\Gamma=V_\Gamma \cap iV_\Gamma $, and regard the $W_\Gamma $-orbits
in $G$ as foliating the torus $K$. As abstract complex manifolds
the leaves of this foliation are all equivalent to the orbit
of the identity which is the subgroup $W_\Gamma /(W_\Gamma \cap \Gamma)$.
The closure of this orbit is a subtorus $K_1$, and we may split
$K$ as a product $K=K_1\times K_2$ of two subtori where $K_2$
is a totally real subgroup of $G$. Let $\mathfrak k_j$ be
the uniquely defined subspaces of $V$ so that 
$K_j=\mathfrak k_j/\Gamma $ and define 
$V_j:=\mathfrak k_j+i\mathfrak k_j$, $j=1,2$.  
Assuming that $\Gamma $ generates $V$ as a complex vector
space, it therefore follows that 
$G=V/\Gamma =V_1/(V_1\cap \Gamma)\times V_2/(V_2\cap \Gamma )$.

Let us summarize the above discussion in the proof of the following
decomposition theorem of Remmert-Morimoto (see \cite {K,M}).
\begin {theorem}\label{decomposition}
A connected Abelian complex Lie group $G=V/\Gamma $ is
the direct product $G=G_1\times G_2\times G_3$, where
$G_3\cong (\mathbb C^n,+)$, $G_2\cong ((\mathbb C^*)^m, \cdot )$
and $\mathcal O(G_1)\cong \mathbb C$.
\end {theorem}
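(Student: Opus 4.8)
The plan is to obtain the decomposition by assembling the structural facts developed above, so that the one thing still requiring real work is that the resulting first factor has no nonconstant holomorphic function. I begin with the reduction already indicated: with $U_1:=V_\Gamma+iV_\Gamma$, choose a complex subspace $U_2$ complementary to $U_1$ in $V$; since $\Gamma\subseteq V_\Gamma\subseteq U_1$ this gives $G\cong (U_1/\Gamma)\times U_2$ with $U_2\cong(\mathbb C^n,+)$, which will be the factor $G_3$. Inside $U_1$ the lattice $\Gamma$ spans over $\mathbb C$, so from now on I assume $\mathrm{Span}_{\mathbb C}(\Gamma)=V$ and that $K=V_\Gamma/\Gamma$ is the maximal compact subgroup of $G$.

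Next I split $K$. Let $W_\Gamma=V_\Gamma\cap iV_\Gamma$ be the largest complex subspace of $V$ contained in $V_\Gamma$, let $K_1$ be the closure in the torus $K$ of the $W_\Gamma$-orbit of the identity (a subtorus, being a closed connected subgroup), and write $K_1=\mathfrak k_1/\Gamma_1$ with $\mathfrak k_1\subseteq V_\Gamma$ a real subspace and $\Gamma_1=\Gamma\cap\mathfrak k_1$ a full lattice in it. Since $\mathfrak k_1$ is a linear subspace, $\Gamma_1$ is a \emph{primitive} sublattice of $\Gamma$, so $\Gamma=\Gamma_1\oplus\Gamma_2$ for a complementary sublattice $\Gamma_2$; set $\mathfrak k_2=\mathrm{Span}_{\mathbb R}(\Gamma_2)$ and $V_j=\mathfrak k_j+i\mathfrak k_j$ for $j=1,2$. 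Two short linear-algebra checks complete the formal part: (i) $\mathfrak k_2$ is totally real, because $\mathfrak k_2\cap i\mathfrak k_2\subseteq V_\Gamma\cap iV_\Gamma=W_\Gamma\subseteq\mathfrak k_1$ while $\mathfrak k_1\cap\mathfrak k_2=0$; and (ii) $V=V_1\oplus V_2$, the sum being $\mathrm{Span}_{\mathbb C}(V_\Gamma)=V$ and the intersection being $0$ by tracking the $\mathfrak k_1$- and $\mathfrak k_2$-components of a vector in $V_1\cap V_2$, again using $W_\Gamma\subseteq\mathfrak k_1$ and (i). Since $\Gamma=\Gamma_1\oplus\Gamma_2$ already lies in $V_1\oplus V_2$, we get $G=(V_1/\Gamma_1)\times(V_2/\Gamma_2)=:G_1\times G_2$. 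Here $G_2\cong((\mathbb C^*)^m,\cdot)$: a $\mathbb Z$-basis of the lattice $\Gamma_2$ in the totally real space $\mathfrak k_2$ is a $\mathbb C$-basis of $V_2$, identifying $V_2/\Gamma_2$ with $\mathbb C^m/\mathbb Z^m$ via $z\mapsto(e^{2\pi i z_1},\dots,e^{2\pi i z_m})$.

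The remaining, essential step is $\mathcal O(G_1)\cong\mathbb C$. Let $f\in\mathcal O(G_1)$ and fix a coset $a+K_1$ in $G_1$; it is compact, so $|f|$ attains a maximum on it, say at $p$. The $W_\Gamma$-orbit through $p$ lies entirely in $a+K_1$ and is the image in $G_1$ of the complex affine subspace $p+W_\Gamma$ of $V_1$; hence the lift $F:V_1\to\mathbb C$ of $f$ satisfies $|F|\le|f(p)|$ on all of $p+W_\Gamma$, so $F$ is constant there by Liouville (it is bounded and holomorphic on the vector space $W_\Gamma$), and by density $f$ is constant on $a+K_1$. As $a$ was arbitrary, $F$ is constant along every translate of $\mathfrak k_1$, so each differential $dF_v$ annihilates $\mathfrak k_1$; being $\mathbb C$-linear it annihilates $\mathrm{Span}_{\mathbb C}(\mathfrak k_1)=V_1$, so $F$, and hence $f$, is constant.

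The main obstacle is this last paragraph: everything preceding it is bookkeeping with the structure already set up in the discussion above, whereas $\mathcal O(G_1)=\mathbb C$ genuinely combines the maximum principle on the compact leaf closure $K_1$, Liouville-type rigidity along the complex leaves, and the passage from ``constant on each $K_1$-coset'' to ``globally constant''. One should also take some care that $\Gamma_1$ is primitive, so that the lattice really splits, and that the orbit closure $K_1$ is a genuine subtorus of $K$.
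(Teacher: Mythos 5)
Your proof is correct and follows essentially the same route as the paper's: split off $\mathbb C^n$ as a complex complement of $U_1=V_\Gamma+iV_\Gamma$, split the maximal torus as $K_1\times K_2$ with $K_2$ totally real to produce the $(\mathbb C^*)^m$ factor, and prove $\mathcal O(G_1)\cong\mathbb C$ by combining compactness of (cosets of) $K_1$ with constancy of $f$ along the dense complex $W_\Gamma$-leaves. Your only deviations are minor and harmless --- Liouville in place of the paper's maximum principle applied to the pullback along the orbit map at the maximum point, and running the argument over every $K_1$-coset so that $dF$ annihilates $\mathfrak k_1$ everywhere instead of the paper's single-coset argument followed by the identity principle --- plus some welcome extra detail (primitivity of $\Gamma_1$, total reality of $\mathfrak k_2$, $V=V_1\oplus V_2$) that the paper leaves implicit.
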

\begin {proof}
The complex group $G_3$ arises (noncanonically) as the
complement of the canonically defined complex subspace
$U_1=V_\Gamma +iV_\Gamma$.  The factor $U_1/\Gamma $ is canonically
embedded in $G$ and it contains the canonically defined
complex subgroup $G_1=V_1/(V_1\cap \Gamma)$.  The noncanonical
splitting $K=K_1\times K_2$ of the maximal compact subgroup
defines the complementary complex subgroup 
$G_2=V_2/(V_2\cap \Gamma )$.  Since $K_2$ is totally real,
$G_2\cong (\mathbb C^*)^m$.

Finally, recall that $K_1$ 
is foliated by the dense orbits of the complex subgroup
$W_\Gamma $.  If $f\in \mathcal O(G_1)$, then we consider
its restriction to $K_1$ which is compact so that 
the restriction of $\vert f\vert $ takes on its maximum at
some point $p\in K_1$.  Since the orbit map $W_\Gamma \to G$,
$w\to w(p)$, is holomorphic, the pullback of $f$ to $W_\Gamma$
is holomorphic and thus the maximum principle implies
that this pull-back is constant.  Consequently, $f$
is constant on the (dense!) $W_\Gamma $-orbit of $p$ and
is therefore constant on the torus $K_1$.  But 
$K_1=\mathfrak k_1/(\mathfrak k_1\cap \Gamma )$ and
$V_1=\mathfrak k_1+i\mathfrak k_i$. Hence, it follows from
the identity principle that the pullback
of $f$ to $V_1$ is constant and consequently $f$ is
constant on $V_1/(V_1\cap \Gamma)=:G_1$. 
\end {proof}
\subsubsection {Cousin groups}
Restricting to the Abelian case and regarding 
$\mathbb C^n$ and $(\mathbb C^*)^m$ as being well-understood,
in the notation of the above decomposition theorem 
it is reasonable to further restrict to the case where $G=G_1$. 
Of course the case where $G$ is
compact has a long history, but it was first in the early
20th century that Cousin called attention to interesting
complex analytic phenomena in the noncompact case 
(see \cite {C}).  Thus if $\mathcal O(G)\cong \mathbb C$, we refer
to $G$ as being a \emph{Cousin group}.

Let us now turn to the matter of pseudoconcavity/pseudoconvexity 
of Cousin groups.  Recalling the notation above,  in this
situation $G=V/\Gamma $ and $V=V_\Gamma +iV_\Gamma $ where 
$V_\Gamma =\mathrm{Span}_\mathbb R(\Gamma )$. The maximal complex
subgroup $W_\Gamma=V_\Gamma\cap iV_\Gamma $ has dense orbits
in the maximal compact subgroup $K=V_\Gamma /\Gamma $. Let
$C$ be a complementary subspace of $V_\Gamma $ such that 
$V_\Gamma =W_\Gamma\oplus C$.  Note that $C$ is totally real and
that $V$ decomposes as a complex vector space as
$$
V=W_\Gamma \oplus (C\oplus iC)\,.
$$
Let $\eta $ be an exhaustion of $iC$ which is defined as the
norm-squared function of some (positive-definite) inner-product.
If we regard $\eta $ as being defined on $V$, then its full
Levi-form $i\partial \bar \partial \eta $ is positive-semidefinite
with degeneracy $W_\Gamma $.

Now define $\rho :G\to \mathbb R^{\ge 0}$ by pulling back $\eta $
by the linear projection to $iC$.  Its properties are summarized
as follows.
\begin {proposition}
A noncompact Cousin group $G=V/\Gamma $ possess a
plurisubharmonic exhaustion $\rho :G\to \mathbb R^{\ge 0}$
which is invariant by the maximal compact subgroup
$K=V_\Gamma /\Gamma $.  For every $p\in G$ the degeneracy of
the Levi-form of $\rho $ at $p$ is the tangent space to
the orbit $W_\Gamma .p$ of the maximal complex subgroup
$W_\Gamma=V_\Gamma \cap iV_\Gamma $ which is dense in $K.p$.
\end {proposition}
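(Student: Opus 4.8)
The plan is to verify directly that the function $\rho$ described just above has all the listed properties; no construction beyond that one is needed.

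First I would check that $\rho$ is well defined on $G$ and is an exhaustion. Since $V=W_\Gamma\oplus(C\oplus iC)$ and $V_\Gamma=W_\Gamma\oplus C$, we have $V=V_\Gamma\oplus iC$ as real vector spaces, and the projection $\pi_{iC}\colon V\to iC$ along $V_\Gamma$ — the one implicit in the definition of $\rho$ — kills $V_\Gamma$, hence in particular kills $\Gamma\subset V_\Gamma$. Therefore $\eta\circ\pi_{iC}$ is $\Gamma$-periodic and descends to $\rho\colon G\to\mathbb R^{\ge 0}$. Recalling that $K=V_\Gamma/\Gamma$ is a compact torus, the splitting $V=V_\Gamma\oplus iC$ gives a diffeomorphism $G=V/\Gamma\cong K\times iC$ under which $\rho$ is $\eta$ composed with the projection onto the second factor; consequently $\rho^{-1}([0,c])\cong K\times\{\eta\le c\}$ is compact for every $c\ge 0$, so $\rho$ is an exhaustion. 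The $K$-invariance is equally transparent: translating a point of $V$ by an element of $V_\Gamma$ leaves its $iC$-component unchanged, so $\eta\circ\pi_{iC}$, and hence $\rho$, is invariant under the translation action of $K$.

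Next I would compute $i\partial\bar\partial\rho$. The only point needing care is that $\pi_{iC}$ is merely $\mathbb R$-linear, so to see the Levi form one should instead use the $\mathbb C$-linear projection $\pi\colon V\to C\oplus iC$ with kernel $W_\Gamma$. One checks that $\eta\circ\pi_{iC}=\tilde\eta\circ\pi$ on $V$, where $\tilde\eta$ denotes the restriction of $\eta\circ\pi_{iC}$ to $C\oplus iC$. Choosing a basis $e_1,\dots,e_k$ of $C$ for which $\eta$ on $iC$ is $\sum_j y_j^2$ in the parametrization $i\sum_j y_je_j$, the holomorphic coordinates $z_j=x_j+iy_j$ on $C\oplus iC$ given by $\zeta=\sum_j z_je_j$ yield $\tilde\eta=\sum_j(\mathrm{Im}\,z_j)^2$, so that $i\partial\bar\partial\tilde\eta$ is positive definite on $C\oplus iC$. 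Since $\pi$ is holomorphic, $i\partial\bar\partial(\eta\circ\pi_{iC})=\pi^{*}(i\partial\bar\partial\tilde\eta)$ is positive semidefinite on $V$ with kernel exactly $\ker\pi=W_\Gamma$; thus $\rho$ is plurisubharmonic and at every point its Levi form degenerates precisely along the image of $W_\Gamma$.

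Finally I would translate this into the orbit statement. Fix $p=[w]\in G$; the quotient map $V\to G$ is a local biholomorphism near $w$ carrying $\eta\circ\pi_{iC}$ to $\rho$, so the degeneracy subspace of the Levi form of $\rho$ at $p$ is the image of $W_\Gamma\subset T_wV$ in $T_pG$. Since $W_\Gamma$ acts on $G$ by translations, the orbit $W_\Gamma.p$ has tangent space at $p$ equal to exactly that image, whence the degeneracy is $T_p(W_\Gamma.p)$; and $W_\Gamma.p$ is dense in $K.p$ simply because it is the translate by $p$ of the dense $W_\Gamma$-orbit of the identity in $K$ recalled above. No step here is a real obstacle; the one thing to be attentive to is the bookkeeping in the Levi-form computation, namely to work with the $\mathbb C$-linear projection $\pi$ rather than with $\pi_{iC}$, so that the degeneracy comes out to be the complex subspace $W_\Gamma$.
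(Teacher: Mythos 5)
Your proposal is correct and follows exactly the construction the paper uses: the paper itself gives no separate proof, the preceding paragraph (the decomposition $V=W_\Gamma\oplus(C\oplus iC)$, the norm-squared exhaustion $\eta$ of $iC$, and its pullback via the projection) being the intended argument. You have simply filled in the routine verifications (well-definedness and $K$-invariance via $\Gamma\subset V_\Gamma$, properness via $G\cong K\times iC$, and the Levi-form computation through the $\mathbb C$-linear projection with kernel $W_\Gamma$), all of which are accurate.
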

In particular, noncompact Cousin groups are pseudoconvex in
a very strong sense and the following is therefore immediate.
\begin {proposition}\label{pseudoconcave lie groups}
Pseudoconcave complex Lie groups are compact complex tori.
\end {proposition}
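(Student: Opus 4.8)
The plan is to combine the structural results already established in this section with the first property of pseudoconcave manifolds. First I would invoke the opening observation, namely that a connected complex Lie group $G$ with $\mathcal O(G)\cong\mathbb C$ is necessarily Abelian (the Proposition deduced from the adjoint representation), so that after noting a pseudoconcave manifold has no nonconstant holomorphic functions, we may reduce immediately to the case $G=V/\Gamma$ Abelian. Next I would apply the Remmert--Morimoto decomposition (Theorem \ref{decomposition}) to write $G=G_1\times G_2\times G_3$ with $G_3\cong(\mathbb C^n,+)$, $G_2\cong((\mathbb C^*)^m,\cdot)$, and $\mathcal O(G_1)\cong\mathbb C$.

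The heart of the argument is then to rule out nontrivial factors $G_2$ and $G_3$ and to force $G_1$ to be compact. For $G_3$: since a pseudoconcave manifold contains a relatively compact open set $Z$ meeting every point of $\mathrm{cl}(Z)$ by holomorphic disks whose boundaries lie in $Z$, and since $G$ is homogeneous, one may translate such a configuration around; but $\mathbb C^n$ carries the global coordinate functions, which restrict to nonconstant holomorphic functions on $G$ whenever $n\ge 1$, contradicting the Proposition that pseudoconcave manifolds have only constant holomorphic functions. The same reasoning applies to $G_2$ via the coordinate functions on $(\mathbb C^*)^m$. Hence $G=G_1$ with $\mathcal O(G)\cong\mathbb C$, i.e. $G$ is either a compact torus or a noncompact Cousin group. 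But Proposition \ref{pseudoconcave lie groups}'s immediate predecessor shows that a noncompact Cousin group admits a plurisubharmonic exhaustion $\rho$, and in particular is \emph{holomorphically convex} in a strong sense; an exhausted manifold cannot contain a relatively compact open set $Z$ with the boundary-disk property at every point of $\mathrm{cl}(Z)$, since $\rho$ attains its maximum on $\mathrm{cl}(Z)$ at some $p$ and the maximum principle applied to $\rho$ along the disk through $p$ (using plurisubharmonicity) forces $\rho$ to be constant on a disk meeting $Z$, hence constant, contradicting that it is an exhaustion of a noncompact manifold. Therefore $G=G_1$ is compact, i.e. a compact complex torus.

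The step I expect to be the main obstacle is the clean exclusion of the noncompact Cousin case: one must argue carefully that the plurisubharmonic exhaustion $\rho$ from the preceding Proposition is genuinely incompatible with pseudoconcavity. The subtlety is that $\rho$ is only plurisubharmonic, not strictly so, so the maximum principle has to be applied to the $\mathrm{cl}(Z)$-maximum of $\rho$ and then propagated — exactly as in the proof that pseudoconcave manifolds have no nonconstant holomorphic functions, but now for a real plurisubharmonic function. Once one observes that a plurisubharmonic function attaining an interior maximum along an analytic disk is constant on that disk, the boundary-disk condition immediately spreads the maximum into the open set $Z$ and then over all of $G$ by a connectedness/propagation argument, contradicting properness of $\rho$ on the noncompact $G$. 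With that in hand the only surviving possibility is the compact torus, completing the proof.
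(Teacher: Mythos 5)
Your overall route is the paper's: reduce via $\mathcal O(G)\cong\mathbb C$ to the Abelian case, then via Remmert--Morimoto to a (noncompact) Cousin group, and play the plurisubharmonic exhaustion $\rho$ of such a group against pseudoconcavity. The gap is in your last step, ``constant on a disk meeting $Z$, hence constant \dots by a connectedness/propagation argument.'' The parallel with the holomorphic case breaks exactly there: for holomorphic $f$ a \emph{local} maximum of $\vert f\vert$ forces local constancy (the maximum modulus principle is local) and the identity theorem then globalizes; plurisubharmonic functions satisfy neither statement. A psh function may attain a local maximum, or be constant on a large set, without being globally constant --- e.g.\ $\max(\mathrm{Re}\,z,0)$ on $\mathbb C$ is psh and constant on the left half-plane. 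In your setup $\rho$ attains only its maximum over $\mathrm{cl}(Z)$; once you transport that value to an interior point of $Z$ it is merely a local maximum, and nothing propagates. Indeed ``noncompact $+$ continuous psh exhaustion'' alone does not contradict the disk condition: for all your argument shows, $\rho$ could equal $\max_{\mathrm{cl}(Z)}\rho$ identically on a neighborhood of $\mathrm{cl}(Z)$ and grow only further out. So you must use a specific property of the Cousin-group exhaustion, which your write-up never does. (A side remark: a psh exhaustion does not make $G$ ``holomorphically convex''; a noncompact Cousin group has only constant holomorphic functions, so it is not holomorphically convex.)

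The missing ingredient, and the paper's actual argument, is that this particular $\rho$ has \emph{no local maxima}: it is the pull-back of the norm-squared function on $iC$ under the projection coming from the splitting $V=W_\Gamma\oplus(C\oplus iC)$, so through every point of $G$ there is an $iC$-direction along which $\rho$ strictly increases. Hence $r_0:=\max_{\mathrm{cl}(Z)}\rho$ is attained only on $\mathrm{bd}(Z)$ and $\rho<r_0$ on $Z$. Taking the pseudoconcavity disk $\psi$ centered at such a boundary point $p$, the function $\psi^*(\rho)$ is psh on $\Delta$, equals $r_0$ at the origin, and is $<r_0$ on the boundary circle (whose image lies in $Z$); this already violates the sub-mean-value inequality on that single disk, with no propagation over $G$ needed. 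If you insert this ``no local maxima'' observation and run the contradiction on one disk, your proof closes and coincides with the paper's.
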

\begin {proof}
Let $G$ be a pseudoconcave Lie group and assume that it 
is noncompact with a relatively compact open subset $Z$
defining its pseudoconcavity.  For $r>0$ define
$B_r:=\{\rho <r\}$, where $\rho $ is the exhaustion function
defined above, and let $r_0:=\mathrm {inf}\{r:B_r\supset Z\}$.
Thus $\mathrm {cl}(Z)\subset \mathrm {cl}(B_{r_0})$ and
there exists $p\in \mathrm {bd}(Z)$ which is also contained
in the level surface $M_{r_0}:=\{\rho=r_0\}$.  Let 
$\psi :\Delta \to \mathrm {cl}(Z)$ be the holomorphic disk
at $p$ which is guaranteed by the pseudoconcavity and note that
$\widehat \rho :=\psi ^*(\rho )$ is plurisubharmonic on $\Delta $ with 
$\widehat \rho (0)=r_0$. Since $\widehat \rho <r_0$ on all
boundary points of $\Delta $, this violates the maximum 
principle.
\end {proof}
Before concluding this paragraph we should make a number
of remarks on the history of this subject, particularly focused
on Andreotti's involvement. First, the proof of 
Proposition \ref{pseudoconcave lie groups} in (\cite {AH})
differs somewhat from the one above: If $G=V/\Gamma $ is a noncompact
Cousin group, then (in the words of Andreotti) \emph{we can introduce
a small earthquake} and move $\Gamma $ to a nearby group
$\Gamma _\varepsilon $ so that the resulting variety 
$G_\varepsilon =V/\Gamma _\varepsilon$ is still pseudoconcave,
but $\mathcal O(G)\not\cong \mathbb C$. It should be
noted that it can be arranged that such an earthquake
produces a holomorphically convex manifold so that
the Levi-problem has a positive answer for a dense set
of discrete groups.

Andreotti was interested in the fields of
meromorphic functions of Cousin groups, in particular
in relation to their projective algebraic 
equivariant compactifications (quasi-abelian varieties)
and the role of $\Theta $-functions (see \cite{AGh}).
It is should be mentioned that $\Theta $-theory is
in general not adequate for describing the
meromorphic functions on a (noncompact) Cousin group.
On the other hand, since the time of Cousin there have
been a number of interesting developments (see \cite{AK}).

Due to our focusing on topics of particular interest to
Andreotti, we have covered only a very small part of 
the interesting early results involving Lie theoretic
considerations in complex analysis.  In closing this paragraph
we would, however, like to note one further result which
underlines the fact that the decomposition
of Theorem \ref{decomposition} can be viewed in a much more
general context (see \cite {MM} and \cite {M}).
\begin {theorem}\label{holomorphic reduction}
Let $G$ be a connected complex Lie group equipped with
the holomorphic equivalence relation $x\eq y$ if and only
if $f(x)=f(y)$ for all $f\in \mathcal O(G)$.  Then the
quotient $G\to G/\eq $ is given as a holomorphic group 
fibration $G\to G/C$ where the fiber $C$ is a closed, central
Cousin subgroup of $G$ and the base is a Stein Lie group.
\end {theorem}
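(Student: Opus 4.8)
The plan is to construct the Cousin subgroup $C$ directly as an intersection of kernels and then verify the three claimed properties (closed/central, Cousin, Stein base) one at a time. First I would set
\[
C := \bigcap_{f\in\mathcal O(G)} \{x\in G : f(x)=f(e)\},
\]
i.e.\ the $\eq$-equivalence class of the identity. Because each defining set is closed, $C$ is a closed subset containing $e$; the subgroup property follows from the fact that $\mathcal O(G)$ is stable under left and right translations, so $g\in C$ implies $f(g^{-1}x)=f(x)$ and $f(xg)=f(x)$ for all $f$ and all $x$, hence $C$ is a subgroup and the relation $\eq$ is exactly the partition into cosets of $C$. That $C$ is \emph{normal} is the same translation argument applied on both sides; centrality then requires the extra input from the earlier discussion, namely that a connected complex Lie group with only constant holomorphic functions has trivial image under $\mathrm{Ad}$ (Proposition on Abelian Cousin groups), so once I know $C$ is a \emph{connected} Cousin group I get $\mathrm{Ad}(C)=\{1\}$ in $\mathrm{GL}_{\mathbb C}(\mathfrak g)$, i.e.\ $C$ is central.

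Next I would identify the Lie algebra $\mathfrak c$ of $C$ and show $C$ is connected and Cousin. The key point is that $\mathfrak c$ is the common kernel of the differentials $df_e$ for $f\in\mathcal O(G)$ restricted appropriately; since $\mathcal O(G)$ is translation-invariant, the connected subgroup $H$ with Lie algebra $\mathfrak c$ is already contained in $C$, and every holomorphic function on $G$ is constant on each $H$-coset. Conversely $C\subset H$ because any $g\in C$ can be joined to $e$ inside a level set and one argues that the level set's identity component is $H$. It remains to see $H$ (hence $C$) is \emph{Cousin}: any $f\in\mathcal O(H)$ — in particular the restriction of a global $f\in\mathcal O(G)$, but one must handle arbitrary $f\in\mathcal O(H)$ — extends or at least its modulus is controlled; the cleanest route is to invoke that $C$ is by construction a connected complex Lie group on which all \emph{globally-extendable} functions are constant, then use the Remmert--Morimoto type decomposition (Theorem \ref{decomposition}, valid since $C$ is abelian by the Ad-argument once we know $\mathcal O(C)\cong\mathbb C$) to peel off any $\mathbb C$ or $\mathbb C^*$ factors; such factors would carry nonconstant holomorphic functions that, being defined by characters, extend to $G$, contradicting $C$'s definition. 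This forces $C=C_1$ in the notation of Theorem \ref{decomposition}, i.e.\ $\mathcal O(C)\cong\mathbb C$, so $C$ is a Cousin group.

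Finally, for the base: $G/C$ is a complex Lie group since $C$ is a closed normal (central) subgroup, and the holomorphic map $G\to G/C$ realizes the quotient by $\eq$, so $\mathcal O(G/C)\cong\mathcal O(G)$ separates the points of $G/C$ by construction. A connected complex Lie group whose holomorphic functions separate points is Stein: one applies the decomposition theorem again to $G/C$ modulo its commutator and uses that the semisimple and unipotent parts are handled classically, or more directly one notes that point-separation plus the group structure upgrades to a holomorphic embedding into some $\mathbb C^N$ via finitely many functions and their translates, and a closed complex Lie subgroup of $\mathbb C^N$ is Stein. \textbf{The main obstacle} I anticipate is the middle step: showing that $C$ is genuinely a Cousin group rather than merely having no \emph{globally extendable} nonconstant functions — that is, ruling out nonconstant $f\in\mathcal O(C)$ that fail to extend to $G$. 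The decomposition-theorem argument above is the right tool, but making precise why a $\mathbb C$- or $\mathbb C^*$-factor of $C$ must contribute a function that extends to all of $G$ (essentially a local-to-global extension across the fibration $G\to G/C$, using that the factor sits centrally) is the delicate part and is where I would spend the most care.
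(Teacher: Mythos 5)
You should first note that the paper itself offers no proof of this theorem: it is quoted as a classical result with references to \cite{MM}, \cite{M} (see also \cite{Mo}), so your argument has to stand on its own, and as written it has genuine gaps at precisely the two points that constitute the theorem. For the base, after observing that functions on $G$ descend to $G/C$ and separate its points, you still need the statement that a holomorphically separable connected complex Lie group is Stein --- that is the Matsushima--Morimoto theorem itself, not a formality. Your shortcut fails: holomorphic separability does not provide finitely many functions separating all points, the resulting map to $\mathbb C^N$ is neither proper nor a homomorphism (so its image is not a closed complex Lie subgroup of $\mathbb C^N$), and holomorphically separable complex manifolds are in general not Stein, so the group structure must be exploited through actual structure theory (algebraicity of the semisimple part, the solvable case, Stein-ness of the resulting group fibrations), which your sketch only gestures at.

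For the fiber, you correctly identify the crux --- ruling out nonconstant $f\in\mathcal O(C)$ that do not extend to $G$ --- but you give no argument for it, and the route you sketch is moreover circular: you invoke Theorem \ref{decomposition}, which requires $C$ abelian, while you obtain abelianness (centrality) only after knowing $\mathcal O(C)\cong\mathbb C$. The circularity is repairable: the matrix entries of $\mathrm{Ad}:G\to\mathrm{GL}_{\mathbb C}(\mathfrak g)$ are global holomorphic functions on $G$, hence constant on the identity class $C$, so $C\subset\ker(\mathrm{Ad})=Z(G)$ before any Cousin considerations. But the extension step itself is not a naive extension of characters; the standard argument writes $C=C_1\times(\mathbb C^*)^m\times\mathbb C^n$, passes to $G/C_1\to G/C$, which is a principal $(\mathbb C^*)^m\times\mathbb C^n$-bundle over the Stein base $G/C$, uses the Stein-ness of such bundles (again Matsushima--Morimoto/Grauert theory) to conclude that $G/C_1$ is holomorphically separable, and then contradicts the definition of $C$ unless $m=n=0$ --- note this already presupposes the Stein-ness of the base, so the two halves cannot be treated as independently as your outline suggests. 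Finally, connectedness of $C$ (part of being a Cousin group) and the fact that $C$ is a complex, not merely real, subgroup are asserted but not argued. The elementary skeleton (definition of $C$, cosets, normality, descent of functions to $G/C$) is correct; the two substantive conclusions, Cousin fiber and Stein base, remain unproven in your write-up and are exactly the content of the cited classical theorems.
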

\subsection {Nilmanifolds}\label{nilmanifolds}
Here we carry through virtually the same line of discussion
for nilmanifolds as that above for Abelian groups. 
By definition a \emph{complex nilmanifold} $X$ is 
homogeneous under the holomorphic action of
a connected complex nilpotent Lie group, i.e., $X=G/H$,
where $G$ is a connected complex nilpotent group and
$H$ is a closed complex subgroup.  We may assume that
the $G$-action on $X$ is almost effective in the sense
that there are no positive-dimensional normal subgroups
of $G$ which are contained in $H$.  In other words, the
subgroup of elements in $G$ which fix every point of $X$
is at most discrete.

One of the first steps toward understanding any complex
homogeneous space $G/H$ is to consider the normalizer
fibration $G/H\to G/N$ where $N$ is the normalizer
in $G$ of the connected component $H^0$.  If we consider
the action of $G$ on $\mathfrak g$ by the adjoint representation
and regard $\mathfrak h$ as a point in the Grassmannian
$\mathrm{Gr}_k(\mathfrak g)$ where $k=\mathrm{dim}_\mathbb C\mathfrak h$,
then the base $G/N$ is the orbit of that point.  In particular,
the base of the normalizer fibration is an orbit via a $G$-representation
in the projective space $\mathbb P(\wedge ^k\mathfrak g)$.

A connected solvable Lie group $G$ acting via a
linear representation of a vector space $V$ stabilizes
a full flag $0\subset V_1\subset \ldots \subset V_{m-1}\subset V$.
of subspaces (Lie's Flag Theorem). Thus, for example, 
a $G$-orbit in the associated projective space 
$\mathbb P(V)$ either lies in the \emph{affine}
space $\mathbb P(V)\setminus \mathbb P(V_{m-1})$ or
is contained in the smaller projective space $\mathbb P(V_{m-1})$.
Repeating this until reaching the point where the
orbit in question is not contained in some $\mathbb P(V_\ell)$
of the flag, we have the following first remark.
\begin {proposition}
Orbits of a connected complex solvable group acting as a group
of holomorphic transformations on a projective space
$\mathbb P(V)$ are holomorphically separable.  In particular,
if $X=G/H$ is a homogeneous manifold under the holomorphic
action of a complex solvable Lie group and 
$\mathcal O(X)\cong \mathbb C$, then, assuming that the
$G$-action is almost effective, it follows that $H$ is
discrete.
\end {proposition}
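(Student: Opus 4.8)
The plan is to reduce both assertions to the elementary fact that any complex manifold admitting a holomorphic injection into a complex vector space is holomorphically separable. For the first assertion I would first replace the given projective action, i.e. the homomorphism $G\to \mathrm{PGL}(V)$, by a genuine linear one: pulling back the central extension $\mathrm{GL}(V)\to \mathrm{PGL}(V)$ along $G\to \mathrm{PGL}(V)$ produces $1\to \mathbb C^*\to \widehat G\to G\to 1$, a connected complex solvable group acting linearly on $V$ and having exactly the same orbits on $\mathbb P(V)$ as $G$. Lie's Flag Theorem now supplies a $G$-stable full flag $0=V_0\subset V_1\subset \dots \subset V_m=V$ with one-dimensional successive quotients. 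Given an orbit $\mathcal O$, I would take $\ell$ minimal with $\mathcal O\subset \mathbb P(V_\ell)$; then $\mathcal O\cap \mathbb P(V_{\ell-1})$ is a $G$-stable subset of the single orbit $\mathcal O$, hence empty by minimality of $\ell$, so $\mathcal O$ is contained in the affine chart $\mathbb P(V_\ell)\setminus \mathbb P(V_{\ell-1})\cong \mathbb C^{\ell-1}$. Since the orbit map $G/G_x\to \mathbb C^{\ell-1}$ is holomorphic and injective, the affine coordinates pull back to holomorphic functions on $G/G_x$ that separate its points.

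For the ``in particular'', I would bring in the normalizer fibration. Assume $X=G/H$ with $\mathcal O(X)\cong \mathbb C$ and the $G$-action almost effective; if $\mathfrak h=0$ then $H$ is already discrete, so assume $k:=\dim_{\mathbb C}\mathfrak h\ge 1$ and consider $G/H\to G/N$ with $N=N_G(H^0)$. As recalled above, $W\mapsto[\wedge^k W]$ identifies the base $G/N$ with a $G$-orbit in $\mathbb P(\wedge^k\mathfrak g)$ for the action induced by $\mathrm{Ad}$, and by the first assertion this orbit is holomorphically separable. On the other hand $G/H\to G/N$ is a surjective holomorphic map, so pullback of functions embeds $\mathcal O(G/N)$ into $\mathcal O(G/H)\cong \mathbb C$, forcing $\mathcal O(G/N)\cong \mathbb C$. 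A holomorphically separable complex manifold with only constant holomorphic functions is a single point, so $G/N$ is a point, $N=G$, and $H^0$ is a connected normal subgroup of $G$ contained in $H$. Almost effectiveness rules out $\dim H^0>0$, whence $H^0=\{e\}$ and $H$ is discrete.

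I do not expect a serious obstacle, the whole argument being ``soft''; the steps that need a little care are (i) the passage from the $\mathrm{PGL}$-action to a linear one, so that Lie's Flag Theorem applies verbatim, (ii) the elementary observation that a group orbit has no proper nonempty invariant subset, which is exactly what confines the orbit to a single affine chart of the flag, and (iii) the verification that holomorphic separability transfers along $G/G_x\hookrightarrow \mathbb P(V)$, a map which is injective but in general only an immersion. The one genuinely new input for the second statement — identifying the base of the normalizer fibration with an orbit in a projective space — has been set up in the preceding paragraph and would be used as it stands; the real temptation to resist is trying to upgrade the conclusion, since Cousin groups, viewed as homogeneous spaces with trivial isotropy, already show that $\mathcal O(X)\cong \mathbb C$ need not make $X$ a point or even holomorphically separable.
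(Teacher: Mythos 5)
Your argument is correct and follows essentially the same route as the paper: Lie's Flag Theorem places the orbit in an affine chart of the flag, giving separability, and the normalizer fibration with base realized as an orbit in $\mathbb P(\wedge^k\mathfrak g)$ yields the discreteness of $H$. The only (welcome) extra care is your explicit lifting of the $\mathrm{PGL}(V)$-action to a linear one via the central extension, a point the paper passes over silently.
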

Thus as in the case of an Abelian group, if we are guided by
investigating the possibility of a nilmanifold $X$ being
pseudoconcave, we may assume that it is of the form
$G/\Gamma $ where $\Gamma $ is discrete.  So let us now
restrict our considerations to such manifolds.

If $X=G/\Gamma $ is a nilmanifold with discrete isotropy and,
without loss of generality, $G$ is
simply-connected, then
$\mathrm {exp}:\mathfrak g\to G$ is a biholomorphic (in fact
algebraic) map.  In analogy to the Abelian case,
realizing $\Gamma $ as a discrete subset
of $\mathfrak g$, it spans a (real) Lie subalgebra 
$\mathfrak g_\Gamma $ such that the associated group $G_\Gamma $
contains $\Gamma $ with $G_\Gamma /\Gamma $ 
compact (Theorem of Malcev-Matsushima).

Continuing with the analogy to the Abelian case we consider
the complex Lie algebra 
$\widehat {\mathfrak g}_\Gamma:=\mathfrak g_\Gamma +i\mathfrak g_\Gamma $
and the associated complex subgroup $\widehat G_\Gamma $.
As a quotient of simply-connected complex nilpotent groups
$G/\widehat G_\Gamma $ is biholomorphically equivalent to some $\mathbb C^n$.
The bundle $G/\Gamma \to G/\widehat G_\Gamma $ is holomorphically trivial
and therefore there is no loss of generality to assume that
$G=\widehat G_\Gamma$, i.e., that the Lie algebra level $\Gamma $ generates
$\mathfrak g$.

The key subalgebra for complex analytic considerations is
$\mathfrak m=\mathfrak g_\Gamma \cap i\mathfrak g_\Gamma $.
It is an \emph{ideal} in $\mathfrak g$!  Of course, just as in
the Abelian case, the action of the associated group $M$
on $G/\Gamma $ can be wild. However, if we replace 
$G$ by $N:=G/M$ and $G_\Gamma $ by $N_\mathbb R:=G_\Gamma/M$,
then $N_\mathbb R$ is a real form of $N$.  In this situation
in the Abelian case we identified the analog of $N/N_\mathbb R$ with $iC$ 
and lifted to $N$ an exhaustion which is defined on $iC$
by a positive-definite inner product. In that case 
straightforward computations show that the lifted exhaustion
has the expected plurisubharmonicity. In the nilpotent
case at hand we must apply Loeb's Theorem (\cite {L}) which
states that since the adjoint representation of $N_\mathbb R$
has purely imaginary spectrum (the eigenvalues are all zero!),
it follows that there is a smooth exhaustion $\eta $ 
of $N/N_\mathbb R$ which lifts to a strictly plursubharmonic 
function on $N$. 

Let us now review the situation discussed above 
where $X=G/\Gamma $, $G=\widehat G_\Gamma $
and $M$ is the normal closed complex subgroup of $G$ defined by
the ideal $\mathfrak m$. Here we have the fibration
$$
X= G/\Gamma \to G/G_\Gamma =N/N_\mathbb R
$$
and we pull back the exhaustion guaranteed by Loeb's theorem
to an exhaustion $\rho $ of $X$ which we view as
a $G_\Gamma $-invariant function on $G$.  Since this is defined
on $G$ by lifting a strictly plurisubharmonic function from
$N=G/M$, it is plurisubharmonic on $G$ and therefore $\rho $
is a plurisubharmonic exhaustion of $X$. Hence we have
the following result.
\begin {proposition} If $G=\widehat G_\Gamma $, then a smooth exhaustion
$\rho :X\to \mathbb R^{\ge 0}$ guaranteed by Loeb's theorem
is plurisubharmonic with Levi-degeneracy determined by
the ideal $\mathfrak m$. In particular, $X=G/\Gamma $ is 
pseudoconvex.
\end {proposition}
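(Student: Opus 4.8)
The plan is to realise $\rho$, after lifting to $G$, as the pullback of Loeb's strictly plurisubharmonic function under the holomorphic projection $\pi\colon G\to N:=G/M$, and then to read off both plurisubharmonicity and the precise degeneracy locus from this single identification. First I would record the structural facts. Since $\mathfrak m=\mathfrak g_\Gamma\cap i\mathfrak g_\Gamma$ is $i$-invariant and is an ideal, $M$ is a closed complex normal subgroup of $G$, so $N=G/M$ is again a simply connected complex nilpotent group and $\pi$ is a holomorphic submersion whose fibres are the $M$-orbits. Because we are in the case $G=\widehat G_\Gamma$, i.e.\ $\mathfrak g_\Gamma+i\mathfrak g_\Gamma=\mathfrak g$, reduction modulo $\mathfrak m$ gives $\mathfrak n_{\mathbb R}+i\mathfrak n_{\mathbb R}=\mathfrak n$ and $\mathfrak n_{\mathbb R}\cap i\mathfrak n_{\mathbb R}=0$, so $N_{\mathbb R}=G_\Gamma/M$ is a real form of $N$; as $N_{\mathbb R}$ is nilpotent its adjoint representation is nilpotent, hence has zero (in particular purely imaginary) spectrum, which is exactly the hypothesis under which Loeb's theorem supplies an exhaustion $\eta$ of $N/N_{\mathbb R}$ whose lift $\widehat\eta$ to $N$ is strictly plurisubharmonic.

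Next I would pin down $\rho$ itself. By construction $\rho$ is the pullback of $\eta$ under the bundle projection $X=G/\Gamma\to G/G_\Gamma=N/N_{\mathbb R}$; lifting to $G$ and using that $G\to N/N_{\mathbb R}$ factors as $G\xrightarrow{\pi}N\to N/N_{\mathbb R}$, the lift of $\rho$ to $G$ is exactly $\pi^*\widehat\eta$. Hence on $G$ one has $i\partial\bar\partial(\pi^*\widehat\eta)=\pi^*\bigl(i\partial\bar\partial\widehat\eta\bigr)$, which is positive semidefinite; so $\rho$ is plurisubharmonic on $G$ and therefore on $X$, the covering $G\to X$ being a local biholomorphism. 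Moreover, since $i\partial\bar\partial\widehat\eta$ is positive definite on $N$, the kernel of the Levi form of $\pi^*\widehat\eta$ at a point $g$ is precisely $\ker(d\pi_g)$, i.e.\ the tangent space to the $M$-orbit through $g$. Transporting this back to $X$ shows that at every point the Levi-degeneracy of $\rho$ is exactly the tangent space to the corresponding leaf of the $M$-foliation, i.e.\ the subspace determined by the ideal $\mathfrak m$.

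It remains to verify that $\rho$ is an \emph{exhaustion}. The projection $X\to N/N_{\mathbb R}$ is the fibre bundle associated to the principal bundle $G\to G/G_\Gamma$, with typical fibre the \emph{compact} nilmanifold $G_\Gamma/\Gamma$ produced by the Malcev--Matsushima theorem; it is therefore proper, and the pullback of the exhaustion $\eta$ of $N/N_{\mathbb R}$ is again an exhaustion. Thus $X=G/\Gamma$ carries a smooth plurisubharmonic exhaustion whose Levi-degeneracy is everywhere $\mathfrak m$, and in particular $X$ is pseudoconvex.

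I expect the only genuinely delicate point to be the bookkeeping in the middle paragraph: one must check that the lift of $\rho$ to $G$ is \emph{literally} $\pi^*\widehat\eta$ for Loeb's function, not merely some lift of a strictly plurisubharmonic function from $N$, since it is this identification that pins the degeneracy locus down to exactly $\mathfrak m$ rather than something larger. The substantive analytic ingredient --- Loeb's construction of a strictly plurisubharmonic lift over a real form with nilpotent adjoint action --- is used as a black box, and everything else is assembled from facts already at hand: the Malcev--Matsushima theorem and the functoriality of $i\partial\bar\partial$ under holomorphic maps.
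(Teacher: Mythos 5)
Your argument is correct and is essentially the paper's own: pull back Loeb's strictly plurisubharmonic lift $\widehat\eta$ on $N=G/M$ via the holomorphic projection $G\to N$, identify this with the lift of $\rho$, and read off plurisubharmonicity and the degeneracy along the $M$-orbits. You in fact make explicit two points the paper leaves implicit --- that the Levi kernel is exactly $\ker(d\pi)$ because $i\partial\bar\partial\widehat\eta>0$, and that $X\to N/N_{\mathbb R}$ is proper (compact Malcev fibre) so the pullback of $\eta$ is genuinely an exhaustion --- which is a welcome tightening rather than a different route.
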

Several remarks are now in order.  First, we arrived
at the the situation where $G=G_\Gamma $ by splitting
off a factor of $\mathbb C^n$ from an arbitrary nilmanifold
of the type $G/\Gamma $. In fact one doesn't need the
assumption of discrete isotropy for such a splitting,
i.e., every complex nilmanifold is a product of $\mathbb C^n $
and a nilmanifold of the form $G/\Gamma $ with $G=\widehat G_\Gamma $
(\cite {LOR}).  Thus we have the following

\smallskip\noindent
{\bf Zusatz.} Every complex nilmanifold possesses a 
smooth plurisubharmonic exhaustion.

It should also be underlined that due to the nonabelian nature
of the situation, the Levi-foliations defined by $\rho $ should
be much more interesting that those in the Abelian case.

Now recall that we originally were investigating the
possibilty of complex homogeneous spaces being pseudoconcave
but ended up with a general pseudoconvexity result.  Thus
with exactly the same proof as that for 
Proposition \ref{pseudoconcave lie groups} we have the following
remark.
\begin {proposition} Pseudoconcave nilmanifolds are compact.
\end {proposition}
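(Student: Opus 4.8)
The plan is to reproduce, essentially verbatim, the argument proving Proposition~\ref{pseudoconcave lie groups}, the sole change being that the plurisubharmonic exhaustion of a noncompact Cousin group is replaced by a plurisubharmonic exhaustion of the nilmanifold. Such an exhaustion is already in hand: by the Zusatz recorded above, every complex nilmanifold $X$ carries a smooth plurisubharmonic exhaustion $\rho\colon X\to\mathbb{R}^{\ge 0}$. (If one prefers a self-contained route that does not quote the Zusatz: a pseudoconcave $X$ has $\mathcal{O}(X)\cong\mathbb{C}$, so by the separation statement for orbits of complex solvable groups the isotropy of the almost effective $G$-action is discrete and $X=G/\Gamma$; the Euclidean factor $\mathbb{C}^n$ split off in the reduction must then be trivial, again because $\mathcal{O}(X)\cong\mathbb{C}$, placing us in the situation $G=\widehat G_\Gamma$ where the Proposition above furnishes the desired $\rho$.)

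Granting $\rho$, suppose for contradiction that the pseudoconcave nilmanifold $X$ is noncompact. Choose a relatively compact open set $Z\subset X$ realizing pseudoconcavity, so that through each $p\in\mathrm{cl}(Z)$ there is a holomorphic disk $\psi\colon\Delta\to\mathrm{cl}(Z)$ with $\psi(0)=p$ and $\psi(\mathrm{bd}(\Delta))\subset Z$. Since $\rho$ is an exhaustion it is bounded on the compact set $\mathrm{cl}(Z)$; writing $B_r:=\{\rho<r\}$ and $r_0:=\inf\{r: B_r\supset Z\}$ one has $r_0<\infty$ and $\mathrm{cl}(Z)\subset\mathrm{cl}(B_{r_0})$, so that $\rho\le r_0$ on $\mathrm{cl}(Z)$; moreover $r_0=\max_{\mathrm{cl}(Z)}\rho$, and this maximum is realized at a point $p\in\mathrm{bd}(Z)$ lying on the level set $M_{r_0}=\{\rho=r_0\}$, across which $\rho<r_0$ throughout $Z$.

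Now invoke pseudoconcavity at this $p$ and let $\psi\colon\Delta\to\mathrm{cl}(Z)$ be the corresponding disk. The pullback $\widehat\rho:=\psi^*\rho$ is subharmonic on $\Delta$ — it is the restriction of a plurisubharmonic function to a holomorphic disk — it satisfies $\widehat\rho(0)=\rho(p)=r_0$, and it satisfies $\widehat\rho<r_0$ on $\mathrm{bd}(\Delta)$ because $\psi(\mathrm{bd}(\Delta))\subset Z$ and $\rho<r_0$ there. This violates the maximum principle for subharmonic functions. Hence $X$ is compact.

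I expect no serious obstacle at this stage: the substantive work has all been done beforehand — the construction of the plurisubharmonic exhaustion via Loeb's theorem together with the reduction to $G=\widehat G_\Gamma$ (equivalently, the Zusatz) — and once that exhaustion is available the pseudoconcavity obstruction is a purely formal repetition of the Lie group case. The only point requiring any care is the elementary claim, inherited verbatim from the proof of Proposition~\ref{pseudoconcave lie groups}, that the top value $r_0$ of $\rho$ on $\mathrm{cl}(Z)$ can be realized at a point of $\mathrm{bd}(Z)$ with $\rho<r_0$ on $Z$ (should it instead be attained at an interior point $q\in Z$, one uses that a plurisubharmonic function with an interior local maximum is locally constant there, so the disk through $q$ already lies in $M_{r_0}$, and one propagates along the compact level set using that the exhaustion of the noncompact $X$ is unbounded); granting that, the maximum principle applied to $\psi^*\rho$ closes the argument.
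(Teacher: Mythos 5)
Your proposal is correct and takes essentially the same route as the paper: the paper proves this proposition by invoking ``exactly the same proof'' as for Proposition~\ref{pseudoconcave lie groups}, applied to the plurisubharmonic exhaustion of the nilmanifold furnished by Loeb's theorem (the Zusatz), which is precisely your argument. Your closing remark about the possibility of the maximum of $\rho$ being attained at an interior point of $Z$ is a small extra refinement that the paper itself does not spell out.
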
 
Finally, e.g. in the case of discrete isotropy, $X$ is Stein
if and only if $G_\Gamma $ is totally real (\cite {GH}).
Furthermore, in analogy to Theorem \ref{holomorphic reduction},
as in the case of Lie groups a general complex homogeneous manifold
$G/H$ has a canonically defined holomorphic 
reduction $X=G/H\to G/I=X/\eq $. In the nilpotent case the
fiber possesses only the constant holomorphic functions
and the base is Stein (\cite {GH}).  This is far from being
true in the general situation.
\subsection {Flag domains}\label {flag domains}
\subsubsection {Background}\label{flag domain background}
Recall that the radical $R$ of a connected Lie group
$G$ is defined to be the maximal connected solvable normal
subgroup of $G$.  If $R$ is trivial, i.e., consists only
of the identity, then $G$ is said to be \emph{semisimple}.
A fundamental difference between solvable and semisimple
groups is that most semisimple groups possess intrinsic 
algebraic structure whereas solvable groups do not.
In general a Lie group $G$ is a product $R\cdot S$ of its radical
and a maximal semisimple subgroup $S$.  In fact, $S$
is unique up to conjugation.  The intersection $R\cap S$ 
is a discrete central subgroup of $G$ and if, for example, $G$
is simply-connected, then this project is a semidirect
product $G=R\ltimes S$.

Above we commented on certain aspects of the solvable case,
i.e., where the complex Lie group $G$ agrees with its radical.
If $G$ is semisimple, $H$ is a complex closed subgroup and
$X=G/H$, then the assumption of existence of meromorphic or 
plurisubharmonic functions on $X$ or even that $X$ is K\"ahler 
is very restrictive. In most cases this forces $H$ to be an
algebraic subgroup of $G$ (\cite {Be,BeO}).  For example it
is known that $X$ is Stein if and only if $H$ is reductive.
In the other extreme of Levi conditions,
even under the further condition that $H$ is algebraic
there is no known characterization of $X=G/H$ being
pseudoconcave.

The situation changes dramatically if $G$ is allowed to
be a real semisimple group. In that setting the first basic
examples arise as \emph{flag domains}.  Here we describe 
the flag domains which possess plurisubharmonic exhaustions
and in the following section we discuss flag domains
with Levi conditions in the opposite direction, e.g.,
pseudoconcavity.  Let us begin with a sketch of some 
background information. The first basic results on
flag domains can be found in (\cite {W}). A systematic
treatment, which in particular gives the details of the
results needed here, is presented in (\cite {FHW}).

Let us begin with a real Lie group $G_0$
and consider an action $G_0\times X\to X$ by holomorphic
transformations on a complex manifold.  If this action
is transitive, then we refer to $X$ as being $G_0$-homogeneous.
In that case we may as usual identify $X$ with $G_0/H_0$
where $H_0$ is the isotropy group at a base point. However,
unlike the case where $X=G/H$ is the homogeneous space
under the holomorphic action of a complex Lie group,
the complex structure of $X$ is not transparently encoded
in the Lie group structure.

At the level of vector fields the situation is slightly better,
because the complexified Lie algebra, 
$\mathfrak g:=\mathfrak g_0+i\mathfrak g_0$, is represented
as an algebra of holomorphic $(1,0)$-vector fields on $X$.
In other words, the complexified Lie group $G$ acts locally
and holomorphically on $X$. To put this in perspective 
consider the example of the standard $G_0=\mathrm{SU}(1,1)$-action
on $\mathbb P_1(\mathbb C)$ and let $X$ be one of its two
open orbits (both are disks!).  Here, as in the general case, 
the complexification $G=\mathrm{SL}_2(\mathbb C)$ acts locally
on $X$ and in addition has the advantage of acting globally
on $\mathbb P_1$.  One regards the holomorphic $G$-manifold
$\mathbb P_1$ as the \emph{globalization} of the local
$G$-manifold $X$.

There is a beautiful theory of globalization of local
actions due to Palais which was adapted to our complex analytic
setting by Heinzner and Iannuzzi (see \cite {HI}).  However,
even when $X$ is $G_0$-homogeneous it is difficult to know
whether or not it is embedded in a $G$-globalization.  On the
other hand, as reflected by the example of the unit disk in 
$\mathbb P_1$, the case where a globalization is implicitly
given is already quite interesting.  The case of flag domains
is one such situation.

In order to discuss flag domains we restrict to the case
where $G_0$ is semisimple.  Due to standard splitting theorems
it is usually enough to assume, as we do here, that it is
even simple. For our purposes it is also enough to consider
the situation where it is embedded in its complexification
$G$.  We let $G\times Z\to Z$ be a holomorphic $G$-action
on a complex manifold and consider the induced $G_0$-action.
A case of fundamental interest, e.g., for studying
the representation theory of $G_0$, is that where $Z$ is
assumed to be a compact, $G$-homogeneous projective manifold.
Choosing a base point we write $Z=G/Q$.

Whereas much is known about \emph{flag manifolds}
$Z=G/Q$ of the above form, restricting to the $G_0$-action 
adds significant complications which lead to new phenomena
which are not yet understood.  The following first key step
is, however, proved by classical combinatorial arguments
(see \cite{W}).
\begin {proposition} The real form $G_0$ has only finitely
many orbits on the flag manifold $Z$.
\end {proposition}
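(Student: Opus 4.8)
The plan is to reduce to the full flag manifold and then to count $G_0$-orbits there by attaching to each one a conjugacy class of Cartan subalgebras of $\mathfrak g_0$. Write $\sigma\colon\mathfrak g\to\mathfrak g$ for complex conjugation over $\mathfrak g_0$, with associated map $g\mapsto\bar g$ on $G$, so that $G_0$ sits inside the real points $G^\sigma$. First I would fix a Borel subgroup $B\subset Q$: the projection $G/B\to G/Q$ is $G$-equivariant, hence $G_0$-equivariant, and surjective, so it carries each $G_0$-orbit upstairs onto a single $G_0$-orbit downstairs. Thus the number of $G_0$-orbits on $G/Q$ is at most the number on $G/B$, and it is enough to bound the latter, i.e. to count $G_0$-orbits on the variety of Borel subalgebras of $\mathfrak g$.

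The heart of the matter is the claim that \emph{every $G_0$-orbit on $G/B$ contains a Borel subalgebra $\mathfrak b$ which contains the complexification $\mathfrak h=\mathfrak h_0+i\mathfrak h_0$ of a Cartan subalgebra $\mathfrak h_0$ of $\mathfrak g_0$}. Starting from an arbitrary Borel subalgebra $\mathfrak b$, I would form $\mathfrak s:=\mathfrak b\cap\sigma(\mathfrak b)$, the Lie algebra of the $\sigma$-stable solvable algebraic group $S:=B\cap\bar B$. By the Bruhat decomposition any two Borel subgroups of $G$ contain a common maximal torus (writing $\bar B=gBg^{-1}$ with $g=b_1\dot w b_2$, $b_i\in B$, $\dot w$ a representative of a Weyl element, one gets $b_1Tb_1^{-1}\subset B\cap\bar B$), so $\mathfrak s$ contains a Cartan subalgebra of $\mathfrak g$; such a Cartan is automatically a maximal torus of the connected solvable group $S$, so the Cartan subalgebras of $\mathfrak g$ contained in $\mathfrak s$ form a single orbit of the unipotent radical of $S$, hence an affine space on which $\sigma$ acts as an algebraic antiholomorphic involution. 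Since this action has a fixed point — equivalently, because torsors under unipotent groups over $\mathbb R$ are trivial — there is a $\sigma$-stable Cartan subalgebra $\mathfrak h\subset\mathfrak s\subset\mathfrak b$, and then $\mathfrak h_0:=\mathfrak h\cap\mathfrak g_0$ is a Cartan subalgebra of $\mathfrak g_0$ with complexification $\mathfrak h$. After applying a suitable element of $G_0$ I may assume $\mathfrak h_0$ belongs to a fixed finite set of representatives.

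To finish I would use two standard finiteness facts. First, $\mathfrak g_0$ has only finitely many $G_0$-conjugacy classes of Cartan subalgebras (Sugiura, Kostant). Second, for each representative $\mathfrak h_0$ with complexification $\mathfrak h$, the Borel subalgebras of $\mathfrak g$ containing $\mathfrak h$ are permuted simply transitively by the finite Weyl group $W(\mathfrak g,\mathfrak h)$, hence form a finite set. Combining these with the previous paragraph, every $G_0$-orbit on $G/B$ meets the finite set $\bigcup_{[\mathfrak h_0]}\{\mathfrak b:\mathfrak b\supset\mathfrak h\}$; hence there are only finitely many $G_0$-orbits on $G/B$, and a fortiori only finitely many on $G/Q$.

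The step I expect to be the main obstacle is the passage, within the key claim, from ``$\mathfrak b\cap\sigma(\mathfrak b)$ contains a Cartan subalgebra of $\mathfrak g$'' to ``it contains a $\sigma$-stable one''. The combinatorial ingredients — that two Borels share a Cartan, that the Borels through a fixed Cartan form a $W$-torsor, and that $\mathfrak g_0$ has finitely many classes of Cartan subalgebras — are classical; the genuine input is the real-point (equivalently, unipotent-cohomology) statement that lets one take the common Cartan to be defined over $\mathbb R$. As a sanity check, for $G_0=\mathrm{SL}_2(\mathbb R)$ acting on $\mathbb P_1(\mathbb C)=G/B$ the compact Cartan $\mathrm{SO}(2)$ accounts for the two half-plane orbits (its two ambient Borels are not $\mathrm{SL}_2(\mathbb R)$-conjugate) while the split Cartan accounts for the single orbit $\mathbb P_1(\mathbb R)$, giving exactly three orbits.
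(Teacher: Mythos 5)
Your argument is correct and is essentially the classical one from Wolf's paper \cite{W}, which the text cites for this proposition without reproducing a proof: reduce to $G/B$, show every Borel subalgebra contains a $\sigma$-stable Cartan subalgebra (via a common maximal torus of $B\cap\bar B$ and the vanishing of real Galois cohomology for unipotent groups), then invoke the Sugiura--Kostant finiteness of Cartan conjugacy classes of $\mathfrak g_0$ together with finiteness of the Weyl group. The one point worth making explicit is that the Cartan subalgebras of $\mathfrak g$ inside $\mathfrak b\cap\sigma(\mathfrak b)$ form a genuine torsor (not merely one orbit) under the unipotent radical of $B\cap\bar B$, because the stabilizer of such a Cartan lies in $C_G(T)\cap S_u=T\cap S_u=\{e\}$, so your fixed-point argument applies exactly as stated.
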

In particular, $G_0$ has open orbits in $Z$.  We refer to
such as a \emph{flag domain} and, if there is no confusion,
will always denote it by $D$.  One purpose of this paper
is to give evidence for the following (perhaps naive) 
conjecture.

\emph{Flag domains are either pseudoconvex or pseudoconcave.}
 
By \emph{pseudoconvex} we mean that there exists an
exhaustion $\rho :D\to \mathbb R^{\ge 0}$ which is 
plurisubharmonic outside of a compact set. \emph{Pseudoconcavity}
is understood in the usual sense of Andreotti 
(see $\S\ref{introduction}$).  Below we give a detailed 
description of the pseudoconvex flag domains.
After doing so, we devote the remainder of the paper 
to describing a large class of pseudoconcave flag domains and 
to giving some indication of the validity of the conjecture.
\subsubsection {Background on cycle spaces}
Our discussion of pseudoconvex flag domains $D$
makes strong use of the \emph{cycles} in $D$ 
which are defined by the actions of $G_0$ and $G$.
Here we begin by introducing minimal background on this
subject, referring the reader to (\cite {FHW}) for detailed
proofs.

Let $K_0$ be a maximal compact subgroup of $G_0$. Any two such 
are $G_0$-conjugate and as a result for our purposes the choice
is not relevant. A basic fact, which is just the tip of the
iceberg of Matsuki duality, is that there is a unique $K_0$-orbit
in $D$ which is a complex submanifold.  Let us refer to it
as the \emph{base cycle} $C_0$, regarded as either a submanifold
or a point in the cycle space of $D$. In the sense of dimension
$C_0$ is \emph{the} minimal $K_0$-orbit in $D$.  If $K$ denotes
the complexification of $K_0$ which is realized as a subgroup
of $G$, then $C_0$ is also a $K$-orbit. It can be characterized
as the only $K$-orbit of a point in $D$ which is contained in
$D$.

Here not much information is needed about
the cycle spaces at hand. However, let us introduce some
convenient notation which will also be of use in the next
section. For this let $q:=\mathrm{dim}_\mathbb C C_0$ and
let $\mathcal C_q(D)$ be the space of $q$-dimensional cycles
in $D$. Recall that such a cycle is a linear combination
$C=n_1X_1+\ldots +n_kX_k$ where the $X_j$ are irreducible
$q$-dimensional subvarieties and the coefficients $n_j$
are positive integers. In a natural way
$\mathcal C_q(D)$ is a complex space which can be
regarded as an open subset of the cycle space 
$\mathcal C_q(Z)$.  Our view of these cycle spaces
is that of (\cite {Ba}).  The reader is also referred to
Chapter 8 of \cite{FHW} for a minimal presentation.

In our particular case $\mathcal C_q(Z)$
is smooth at $C_0$ (see Part IV of \cite{FHW}) and thus
it makes sense to speak of \emph{the} irreducible component
of $\mathcal C_q(D)$ at $C_0$.  We simplify the notation
by replacing $\mathcal C_q(D)$  by this irreducible component.  
Since the algebraic group
$G$ is acting algebraically on $Z$, it acts algebraically
on the associated cycle spaces $\mathcal C_q(Z)$. 
The group-theoretical cycle space
$\mathcal M_D$ which, for example, is of basic interest in 
representation theory is defined as the connected component
of the intersection of the orbit $G.C_0$ with $\mathcal C_q(D)$.
It is in fact a closed submanifold of 
$\mathcal C_q(D)$ (\cite {HoH}).

\subsubsection {Cycle connectivity}
We say that two points $x,y\in D$ are \emph{connected by cycles} 
if there are cycles $C_1,\ldots ,C_m\in \mathcal M_D$ so that
the union $C_1\cup \ldots \cup C_m$ is connected with $x\in C_1$
and $y\in C_m$.  The relation defined by $x\eq y$ if and only
if $x$ and $y$ are connected by cycles is an equivalence
relation. Since $\mathcal M_D$ is $G_0$-invariant, it
is by definition $G_0$-invariant.  Thus, if we choose
a base point $z_0$ in $D$ and identify $D$ with
$G_0/H_0$ where $H_0$ is the $G_0$-isotropy at $z_0$,
then the quotient $D\to D/\eq $ defined by the equivalence
relation is given by a homogeneous fibration 
$G_0/H_0\to G_0/I_0$ where $I_0$ is the stabilizer of
the equivalence class $[z_0]$.  

By definition the equivalence classs $[z_0]=I_0/H_0=:F$ is
a closed real submanifold of $D$. Note that if $\Omega $
is a relatively compact open neighborhood of $z_0$ in $F$,
then there is an open neighborhood $U$ of the identity
of the isotropy group $G_{z_0}$ which maps $U$ into $F$.
Since $G_{z_0}$ has only finitely many orbits in $Z$
and since $G_{z_0}$ is complex, this imples that
$F$ contains an open dense subset which is a complex
submanifold of $D$.  But $I_0$ acts transitively and holomorphically
on $F$ and therefore $F$ is a complex submanifold of $D$.
The stabilizer in $\mathfrak g$ of $F$, i.e., the stabilizer
of $F$ under the local $G$-action, is a complex Lie subalgebra
$\widehat {\mathfrak q}$ which contains the algebra 
$\mathfrak q$ of the $G$-isotropy subgroup at $z_0$.  
Consequently, there exists a globally defined complex
subgroup $\widehat Q$ so that the fiber $F$ at the base point
of $D\to D/\eq $ is an open $I_0$-orbit in the (compact) 
fiber of $G/Q\to G/\widehat Q$ at the base point.
\begin {proposition}
The cycle connectivity reduction $D\to D/\eq=\widehat D$ of a flag
domain is given by the restriction of a canonically defined
$G$-equivariant map $Z=G/Q\to G/\widehat Q=\widehat Z$. 
It is a holomorphic map onto a 
$G_0$-flag domain $\widehat D$ in $\widehat Z$.  
In particular, the fibers of $D\to D/\eq$
are themselves connected complex manifolds.
\end {proposition}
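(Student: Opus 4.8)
The plan is to assemble the facts collected in the preceding discussion and to supply the two points not yet addressed there: that $\widehat Q$ is a parabolic subgroup, so that $\widehat Z=G/\widehat Q$ is again a flag manifold and $\pi\colon Z=G/Q\ra G/\widehat Q=\widehat Z$ is a $G$-equivariant holomorphic fibre bundle; and that, after restriction to $D$, the fibres of $\pi$ are exactly the cycle-connectivity classes.

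First I would note that $\widehat{\mathfrak q}$ is parabolic: it contains the parabolic subalgebra $\mathfrak q$ (parabolic because $Z=G/Q$ is a flag manifold of the semisimple group $G$), and a subalgebra containing a parabolic subalgebra is itself parabolic. Hence $\widehat Q$, being the globally defined complex subgroup with $Q\subseteq\widehat Q$ produced above, is a parabolic subgroup of $G$ (a closed subgroup containing a parabolic is parabolic), $\widehat Z:=G/\widehat Q$ is a flag manifold, and $\pi\colon G/Q\ra G/\widehat Q$ is a $G$-equivariant holomorphic fibre bundle with fibre the flag manifold $\widehat Q/Q$. Since $\widehat{\mathfrak q}$ was defined intrinsically from $D$ — the choice of base point $z_0$ introducing only the conjugation ambiguity already present in the presentations $Z=G/Q$, $D=G_0/H_0$ — the map $\pi$ is canonical in the asserted sense. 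The restriction $\pi|_D$ is holomorphic; since $\pi$ is $G_0$-equivariant, $\pi(D)=\pi(G_0\cdot z_0)=G_0\cdot\pi(z_0)$ is a single $G_0$-orbit, and since $\pi$ is an open map and $D$ is open, $\pi(D)$ is open in $\widehat Z$. Thus $\widehat D:=\pi(D)$ is an open $G_0$-orbit in $\widehat Z$, i.e.\ a flag domain, and $\pi|_D\colon D\ra\widehat D$ is holomorphic and onto.

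It remains to identify $\pi|_D$ with the reduction $D\ra D/\eq$, for which it suffices to show that its fibre through $z_0$, namely $D\cap(\widehat Q/Q)$, equals the class $F$. The inclusion $F\subseteq D\cap(\widehat Q/Q)$ is the already established statement that $F$ is an open $I_0$-orbit in the fibre $\widehat Q/Q$ of $G/Q\ra G/\widehat Q$. For the converse, let $y\in D\cap(\widehat Q/Q)$ and write $y=g_0\cdot z_0$ with $g_0\in G_0$; since $\eq$ is $G_0$-invariant the class of $y$ is $F_y=g_0\cdot F$, and since $\pi(y)=\pi(z_0)$ the element $g_0$ maps the fibre $\widehat Q/Q=\pi^{-1}(\pi(z_0))$ onto itself. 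Consequently $F_y=g_0\cdot F$ is again open in $\widehat Q/Q$, hence open in $D\cap(\widehat Q/Q)$, and it is closed in $D$, hence closed in $D\cap(\widehat Q/Q)$; in particular $D\cap(\widehat Q/Q)=\bigsqcup F_y$ is a disjoint union of classes. Each class $F_z$ is moreover connected: any $w$ with $w\eq z$ is joined to $z$ inside $F_z$ by the connected union of a finite chain of cycles in $\mathcal M_D$ (each cycle being a $G$-translate of the connected base cycle $C_0$), so $F_z$ is a union of connected sets through $z$. Therefore the $F_y$ are precisely the connected components of $D\cap(\widehat Q/Q)$, and the desired identity $D\cap(\widehat Q/Q)=F$ is equivalent to the connectedness of $D\cap(\widehat Q/Q)$.

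That connectedness is the main obstacle. I would attack it by showing that $D\cap(\widehat Q/Q)$ is swept out by the cycles of $\mathcal M_D$ contained in $\widehat Q/Q$ and that this subfamily already connects any two of its points. That $D\cap(\widehat Q/Q)$ is a union of such cycles is clear — $C_0\subseteq F\subseteq\widehat Q/Q$, and a cycle through a point of $D\cap(\widehat Q/Q)$, being connected, lies in that point's class, which in turn lies in $\widehat Q/Q$ — but verifying that this subfamily is itself connected and connects the whole of $D\cap(\widehat Q/Q)$ requires the structure theory of flag domains and their base cycles, in particular the finiteness of the number of $G_0$-orbits on $Z$ and the cycle-space results of \cite{FHW}; one expects here that a failure of connectedness would produce a cycle-connected subset of $\widehat Q/Q$ strictly larger than $F$, which is incompatible with $\widehat{\mathfrak q}$ being the full stabilizer of the class $F$. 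This is the delicate point; once it is in hand the ``in particular'' clause follows at once, since the fibres of $D\ra D/\eq$ are then the classes $F_y$, each of which is a complex submanifold of $D$ and is connected by the argument above.
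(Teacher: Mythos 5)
Your setup is fine -- and in places more careful than the paper, which takes for granted that $\widehat Q$ is parabolic, that $\pi(D)$ is an open $G_0$-orbit, and that the classes $F_y$ are the connected components of $D\cap(\widehat Q/Q)$. But you stop exactly at the one point that actually requires proof: the connectedness of $D\cap(\widehat Q/Q)$, equivalently of the fibers of $D\to\widehat D$. The route you sketch for it does not work. If the fiber were disconnected, the extra components would be further equivalence classes $F_y=g_0\cdot F$ with $g_0\in G_0\cap\widehat Q$; since $\widehat Q$ is parabolic, hence self-normalizing, the stabilizer in $\mathfrak g$ of such an $F_y$ is $\mathrm{Ad}(g_0)\widehat{\mathfrak q}=\widehat{\mathfrak q}$ again, so there is no conflict whatsoever with $\widehat{\mathfrak q}$ being the full stabilizer of $F$, and no ``cycle-connected subset strictly larger than $F$'' is forced to appear. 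The definition of $\widehat{\mathfrak q}$ simply cannot see whether the $\pi$-fiber meets $D$ in one class or several; some genuinely global (topological) input is needed.

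The paper supplies that input in one line: the fibration $D\to\widehat D$ is a $G_0$-homogeneous fibration $G_0/H_0\to G_0/I_0'$, and the open orbit $\widehat D$ is \emph{simply connected} (a known fact about flag domains, see Wolf or \cite{FHW}); the exact homotopy sequence (equivalently, the covering of $\widehat D$ obtained from the connected components of the fibers) then forces the fibers to be connected. So the gap in your argument is precisely the absence of this simple-connectedness argument; without it, or some substitute of comparable strength, the identification of $\pi|_D$ with the cycle-connectivity reduction -- and hence the ``in particular'' clause -- is not established.
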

\begin {proof}
Except for one point the proof is given above: We must
show that the intersection of the fibers of the
$G$-equivariant map $Z\to \widehat Z$ with $D$ are
connected.  But this follows immediately from the
fact that $\widehat D$ is simply-connected
(see \cite{W} or \cite{FHW}).
\end {proof}
Since the base cycle $C_0$ is a $K$-orbit and in particular
$k(z_0)\eq z_0$, we know that $K$ stabilizes $[z_0]$. In other
words, $K\subset \widehat Q$ and it follows that the
base cycle $\widehat C_0$ in $\widehat D$ is just a single point.
Since it is known that this can only happen when 
$\widehat D$ is a $G_0$-Hermitian symmetric space of noncompact
type embedded in its compact dual $\widehat Z$ (\cite {W}).
Let us note this for future reference.
\begin {proposition}
Either $D=G_0/H_0$ is cycle connected, i.e., any two points are
connected by a chain of cycles in $\mathcal M_D$ or
the cycle connectivity equivalence reduction $D\to \widehat D$
is such that $\widehat D=G_0/K_0$ is a Hermitian symmetric
space embedded in its compact dual $\widehat Z$ and the
neutral fiber $K_0/H_0=C_0$ is the base cycle itself.
\end {proposition}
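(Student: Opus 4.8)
\textit{Proof plan.}
The plan is to apply the two preceding Propositions to the cycle connectivity reduction $D\to D/\eq =\widehat D$ and to split according to whether this reduction is trivial. Recall from the previous Proposition that $D\to \widehat D$ is a $G_0$-equivariant holomorphic fibration onto a flag domain $\widehat D\subset \widehat Z=G/\widehat Q$ with connected complex fibers, obtained by restricting the $G$-equivariant map $\pi\colon Z=G/Q\to \widehat Z$. Choose the base point $z_0$ on the base cycle $C_0$; the image $\widehat D$ does not depend on this choice up to $G_0$-conjugacy. Write $\widehat D=G_0/I_0$ with $I_0=\mathrm{Stab}_{G_0}([z_0])\supseteq H_0$. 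If $\widehat D$ is a point, then $D=[z_0]$, which is precisely the statement that any two points of $D$ are joined by a connected chain of cycles in $\mathcal M_D$; this is the first alternative.

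Assume now that $\dim_{\mathbb C}\widehat D>0$. Since $z_0\in C_0$ and $C_0$ is a single $K$-orbit lying in $\mathcal M_D$, every point of $C_0$ is cycle-connected to $z_0$, so $K.z_0=C_0\subseteq [z_0]$. Applying $\pi$ and using $G$-equivariance, $K$ fixes $\widehat z_0:=\pi(z_0)$, i.e. $K\subseteq \widehat Q$. Hence $\{\widehat z_0\}$ is a $K_0$-orbit in $\widehat D$ which is a complex submanifold, and therefore, being the only such orbit, is the base cycle $\widehat C_0$ of $\widehat D$; in particular $\widehat C_0$ is a single point. Now invoke Wolf's theorem (\cite{W}): a flag domain whose base cycle is zero-dimensional is, $G_0$-equivariantly, the Hermitian symmetric space $G_0/K_0$ of noncompact type realized in its compact dual. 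Applied to $\widehat D$ this yields $\widehat D\cong G_0/K_0$; since also $\widehat D=G_0/I_0$ with $K_0\subseteq I_0$, and $I_0$ is $G_0$-conjugate to the maximal compact subgroup $K_0$, it follows that $I_0=K_0$.

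It remains to identify the neutral fiber of $D\to \widehat D$. By the previous step $H_0\subseteq I_0=K_0$, so $K_0/H_0$ is meaningful, and the fiber over the base point is $[z_0]=I_0.z_0=K_0.z_0=C_0$, the last equality because $z_0\in C_0$ and $C_0$ is a $K_0$-orbit. (Equivalently: $K_0\subseteq \widehat Q$ forces $C_0$ into a single fiber of $D\to \widehat D$, and $C_0$, being a $K_0$-orbit that is a complex submanifold, exhausts that fiber by the characterization of the base cycle.) This gives the second alternative and completes the proof.

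The only input that is not a mere rearrangement of the two preceding Propositions is Wolf's classification of flag domains with a point as base cycle, which is being quoted. The step deserving some care is the identification $I_0=K_0$ and the ensuing observation that the neutral fiber coincides with — rather than merely contains — the base cycle $C_0$; this rests on the uniqueness of $C_0$ among $K_0$-orbits that are complex submanifolds, together with the freedom to place $z_0$ on $C_0$.
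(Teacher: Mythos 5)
Your argument is correct and follows essentially the same route as the paper: $C_0=K.z_0$ forces $K\subset\widehat Q$, so the base cycle of $\widehat D$ is a point, Wolf's theorem then identifies $\widehat D$ as the Hermitian symmetric space $G_0/K_0$ in its compact dual, and since the isotropy $I_0$ contains the maximal compact $K_0$ and is itself (conjugate to) a maximal compact subgroup, $I_0=K_0$ and the neutral fiber is $K_0.z_0=C_0$. Your explicit case split on $\dim\widehat D$ and the use of the uniqueness of the complex $K_0$-orbit are just slightly more detailed renderings of the paper's argument.
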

\begin {proof}
We know that $K_0$ fixes the base point in $\widehat D$
and by general theory the isotropy group of a $G_0$-symmetric
space is exactly a maximal compact subgroup of $G_0$.
\end {proof}
As a consequence we see that if $D$ is not cycle connected,
then any two cycles either agree or are disjoint.
In other words, in that case the fibers of the reduction 
$D\to D/\eq$ are cycles and the cycle space $\mathcal M_D$ 
is the Hermitian symmetric space $\widehat D$.
\subsubsection {Pseudoconvex flag domains}
\label {pseudoconvex flag domains}   
Let us say that a complex manifold $X$ is \emph{pseudoconvex}
if it possesses a continuous proper exhaustion function 
$\rho :X\to \mathbb R^{\ge 0}$ which is plurisubharmonic
on the complement $X\setminus S$ of a compact set $S$.
It should be underlined that, even if $\rho $ is
smooth, we are only assuming the semi-positivity of its
Levi-form.

Given the preparation in the previous paragraph, it is
now a simple matter to characterize pseudoconvex flag
domains.  For this the following is the main remark.
\begin {lemma}
Cycle connected flag domains possess only constant
plurisubharmonic functions.
\end {lemma}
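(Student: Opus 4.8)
The plan is to exploit that the cycles forming $\mathcal M_D$ are compact connected complex submanifolds of $D$, so that any plurisubharmonic function on $D$ is forced to be constant on each of them by the maximum principle, and then to propagate this constancy along chains of cycles using the cycle connectivity hypothesis.

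First I would record that every cycle $C\in\mathcal M_D$ has smooth, connected, compact support lying in $D$. Indeed, the base cycle $C_0$ is an orbit of the connected complex group $K$, hence a connected compact complex submanifold of $D$; and since $\mathcal M_D$ is by definition a connected component of $G.C_0\cap\mathcal C_q(D)$, each $C\in\mathcal M_D$ equals $g\cdot C_0$ for some $g\in G$, whose support $|C|$ is the image of $C_0$ under the biholomorphism of $Z=G/Q$ given by $g$ and therefore is again a connected compact complex submanifold, contained in $D$ because $\mathcal C_q(D)$ consists of cycles in $D$. In particular there are no multiplicities or singular points to contend with.

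Next, let $\rho\colon D\to\mathbb R\cup\{-\infty\}$ be plurisubharmonic. For any $C\in\mathcal M_D$ the restriction $\rho|_{|C|}$ is plurisubharmonic on the complex manifold $|C|$; being upper semicontinuous on the compact set $|C|$ it attains its supremum, and since $|C|$ is connected the maximum principle for plurisubharmonic functions shows that $\rho$ is constant on $|C|$. Finally I would use cycle connectivity: given $x,y\in D$ pick $C_1,\dots,C_m\in\mathcal M_D$ with $|C_1|\cup\dots\cup|C_m|$ connected, $x\in|C_1|$, $y\in|C_m|$; a standard graph-connectivity argument lets us assume, after relabelling and deleting superfluous cycles, that consecutive supports meet, and then the constant value of $\rho$ on $|C_i|$ agrees with that on $|C_{i+1}|$ for every $i$, giving $\rho(x)=\rho(y)$. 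Hence $\rho$ is constant on $D$.

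There is no real obstacle here: the whole argument is the maximum principle, and the only points deserving a moment's care are the reduction to honest smooth cycles (so that the restriction of a plurisubharmonic function and the maximum principle may be invoked in their most elementary form) and the elementary combinatorics of passing from a connected union of cycles to a chain of pairwise-intersecting ones.
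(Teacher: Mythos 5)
Your proof is correct and follows essentially the same route as the paper: restrict the plurisubharmonic function to each cycle, use the maximum principle on the compact connected complex support to get constancy there, and propagate along a chain provided by cycle connectivity. The extra details you supply (cycles being $G$-translates of the smooth base cycle $C_0$, and the reduction to a chain of pairwise-intersecting cycles) simply make explicit what the paper leaves implicit.
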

\begin {proof}
Let $D$ be a pseudoconvex flag domain and consider a
plurisubharmonic function $\rho $ on $D$.  Given 
two points $x,y\in D$, connect them with a chain
$C_1,\ldots , C_m$ of cycles. Since $\rho \vert C_i$
is constant for every $i$, it is immediate that
$\rho (x)=\rho (y)$.
\end {proof}
\begin {proposition}
Cycle connected flag domains are not pseudoconvex.
\end {proposition}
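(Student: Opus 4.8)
The plan is to derive a contradiction by combining the preceding Lemma with the standard truncation trick for plurisubharmonic functions. So suppose a cycle connected flag domain $D$ were pseudoconvex: there is a continuous proper exhaustion $\rho\colon D\to\mathbb R^{\ge 0}$ which is plurisubharmonic on $D\setminus S$ for some compact set $S$. The first point to record is that $D$ is noncompact. Indeed, $D$ is a proper open subset of the flag manifold $Z$ (a noncompact simple group $G_0$ is never transitive on $Z$; e.g.\ there is always a closed $G_0$-orbit of strictly smaller dimension), and an open proper subset of a compact manifold is noncompact. Consequently $\rho$ is unbounded and every sublevel set $\{\rho\le c\}$ is a compact set strictly smaller than $D$.

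Next I would manufacture a genuinely plurisubharmonic function on all of $D$ out of $\rho$. Since $S$ is compact and $\rho$ continuous, choose a constant $c>\max_S\rho$; then in fact $S\subset\{\rho<c-\varepsilon\}$ for some $\varepsilon>0$, so $\rho$ is plurisubharmonic on the open set $U:=\{\rho>c-\varepsilon\}$, which contains $\{\rho\ge c\}$. Now put $\tilde\rho:=\max(\rho-c,\,0)$. On $U$ this is the maximum of the two plurisubharmonic functions $\rho-c$ and $0$, hence plurisubharmonic; on the open set $\{\rho<c\}$ it is identically $0$; and the two descriptions agree on the overlap $\{c-\varepsilon<\rho<c\}$. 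Since $U\cup\{\rho<c\}=D$ and plurisubharmonicity is a local condition, $\tilde\rho$ is a continuous plurisubharmonic function on all of $D$.

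The decisive point is then that $\tilde\rho$ is not constant: it vanishes on the nonempty compact set $\{\rho\le c\}$, while it is strictly positive at any point with $\rho>c$, and such points exist precisely because $\{\rho\le c\}$ is compact whereas $D$ is not. This contradicts the Lemma, which says that a cycle connected flag domain admits only constant plurisubharmonic functions; hence $D$ is not pseudoconvex. I do not expect a genuine obstacle here: the only things needing care are (i) noting that $D$ is noncompact, which is exactly what makes the truncation $\tilde\rho$ nonconstant rather than vacuously zero, and (ii) the routine verification that $\tilde\rho$ is plurisubharmonic across the level set $\{\rho=c\}$, which is immediate from locality together with the stability of plurisubharmonicity under finite maxima. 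The degenerate case $S=\emptyset$ is harmless: then $\rho$ is already plurisubharmonic on $D$, so the Lemma forces it to be constant, again contradicting properness on the noncompact $D$.
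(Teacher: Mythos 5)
Your proof is correct and follows essentially the same route as the paper: truncate the exhaustion above the exceptional compact set by taking a maximum with a suitable constant, obtaining a nonconstant plurisubharmonic function on $D$ that contradicts the preceding Lemma. If anything you are more careful than the printed proof, which sets $r_0=\min(\rho\vert S)$ where your choice $c>\max_S\rho$ is what the gluing actually requires, and which leaves the noncompactness of $D$ (needed for nonconstancy) implicit --- though your side remark that a noncompact real form is never transitive on $Z$ has rare exceptions (e.g.\ $\mathrm{SL}_n(\mathbb H)$ acting on $\mathbb P_{2n-1}(\mathbb C)$), cases tacitly excluded here since there $D=Z$ is compact.
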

\begin {proof}
Given a cycle connected flag domain $D$, assume to
the contrary that it is pseudoconvex. Let 
$\rho :D\to \mathbb R^{\ge 0}$ be an exhaustion 
which is plurisubharmonic on $D\setminus S$.
Define $r_0=\mathrm {min}(\rho \vert S)$ and define
$\widehat \rho$ to be the maximum of $\rho $ and
the constant function $r_0 +1$. Then, contrary to
the above Lemma, $\widehat \rho$
is a nonconstant plurisubharmonic function on $D$.
\end {proof}
It follows that pseudoconvex flag domains have 
cycle reduction $\pi :D\to \widehat D$ to a Hermitian
symmetric space $\widehat D$. The unique cycle through
a given point $z\in D$ is the $\pi $-fiber $\pi ^{-1}(\pi (p))$
through that point.  Since $\widehat D$ is a contractible
Stein manifold, the bundle $\pi :D\to \widehat D$ is trivial
and $D$ can be (noncanonically) realized as the product
$C_0\times \widehat D$.  In summary we have the following
characterization of pseudoconvex flag domains.
\begin {theorem}\label {pseudoconvex}
For a flag domain $D$ the following are equivalent.
\begin {enumerate}
\item
$D$ is pseudoconvex
\item
$D$ is holomorphically convex with Remmert reduction
$D\to \widehat D$ to a Hermitian symmetric space.
\item
$D$ is not cycle connected with cycle reduction agreeing
with its Remmert reduction.
\item
$D$ possesses a nonconstant plurisubharmonic function.
\end {enumerate}
\end {theorem}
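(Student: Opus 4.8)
The plan is to establish the four conditions equivalent by closing the loop $(1)\Rightarrow(3)\Rightarrow(2)\Rightarrow(4)\Rightarrow(1)$, using as the only substantive inputs the three facts already assembled above: the Lemma that cycle connected flag domains carry only constant plurisubharmonic functions, the Proposition that cycle connected flag domains are not pseudoconvex, and the dichotomy recorded just before the statement — a flag domain $D$ which is \emph{not} cycle connected has cycle reduction $\pi\colon D\to\widehat D$ onto a Hermitian symmetric space $\widehat D=G_0/K_0$ of noncompact type, hence a contractible Stein bounded symmetric domain, with every $\pi$-fibre a copy of the base cycle $C_0$, and, the base being contractible and Stein, $D\cong C_0\times\widehat D$. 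Note that $C_0$, being a $K_0$-orbit with $K_0$ compact and connected, is itself compact and connected.

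For $(1)\Rightarrow(3)$ I would argue: if $D$ is pseudoconvex then by the cited Proposition it is not cycle connected, so the dichotomy gives $D\cong C_0\times\widehat D$ with $\widehat D$ Stein and $C_0$ compact and connected. Consequently every $f\in\mathcal O(D)$ is constant along the slices $C_0\times\{w\}$, so $\mathcal O(D)=\pi^*\mathcal O(\widehat D)$; since $\pi$ is proper with connected fibres onto the Stein manifold $\widehat D$, it is the Remmert reduction of $D$, which is therefore holomorphically convex, and this Remmert reduction is by construction the cycle reduction. That is exactly (3) (and it already yields the holomorphic convexity asserted in (2)). For $(3)\Rightarrow(2)$: condition (3) presupposes that the Remmert reduction exists, i.e. that $D$ is holomorphically convex, and identifies it with the cycle reduction of a non-cycle-connected flag domain; by the dichotomy its target is a Hermitian symmetric space, which is (2).

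For $(2)\Rightarrow(4)$: the Remmert reduction $r\colon D\to\widehat D$ is surjective onto the Stein space $\widehat D$, which here is Hermitian symmetric; being Stein it is of noncompact type, i.e. a bounded symmetric domain, and so carries a strictly plurisubharmonic function, whose pullback by the surjection $r$ is a nonconstant plurisubharmonic function on $D$. For $(4)\Rightarrow(1)$: by the contrapositive of the Lemma, a flag domain carrying a nonconstant plurisubharmonic function is not cycle connected, so again $D\cong C_0\times\widehat D$ with $\widehat D$ Stein and $C_0$ compact, whence the projection $p\colon D\to\widehat D$ is proper; pulling back a plurisubharmonic exhaustion of the Stein manifold $\widehat D$ under $p$ produces a plurisubharmonic exhaustion of $D$, so $D$ is pseudoconvex — in fact in the strong sense, with globally semipositive Levi form.

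The genuine content of the theorem all resides in the preparatory material, above all the dichotomy "cycle connected, or else a product with a bounded symmetric domain", which itself rests on the homogeneity of the cycle connectivity fibration together with the compact-dual embedding of the base cycle space. Granting that, the theorem is essentially a bookkeeping exercise, and the single point I would be careful about is making sure that "not cycle connected" really does feed into the product decomposition $D\cong C_0\times\widehat D$ in the two implications $(1)\Rightarrow(3)$ and $(4)\Rightarrow(1)$, and that the resulting proper projection has the compact connected fibres needed both to identify $\mathcal O(D)$ — so that $\widehat D$ is simultaneously the cycle reduction and the Remmert reduction — and to transport exhaustions from the Stein base back up to $D$.
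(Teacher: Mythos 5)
Your proposal is correct and follows essentially the same route as the paper: the paper's proof is exactly the preceding discussion (pseudoconvexity rules out cycle connectivity via the Lemma and Proposition, the dichotomy gives the cycle reduction onto a Hermitian symmetric space, and triviality of the bundle over the contractible Stein base yields $D\cong C_0\times\widehat D$, from which all four conditions follow). You merely make explicit the chain of implications that the paper leaves as a summary, which is a faithful filling-in rather than a different argument.
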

It should be underlined that domains fulfilling any one of
the above conditions are of the form $D=G_0/L_0$ where
$L_0$ is a \emph{compact} subgroup of the group $G_0$ which is
of Hermitian type.  As a result one can also describe such
domains via root-theoretic data (see \cite {W,FHW}).
\section {Pseudoconcave flag domains}
\label {pseudoconcave flag domains}
Above we began our study of flag domains from the point
of view of Levi-geometry by showing that pseudoconvex
flag domains are of a very special nature (
Theorem \ref{pseudoconvex flag domains}). As a Leitfaden
for further investigations we conjecture that if a flag
domain is not pseudoconvex, then it is pseudoconcave.
Here we begin with a brief exposition of constructions
of two natural exhaustions of flag domains whose Levi-curvature
is at least in principle computable. Then, using the 
exhaustion constructed using cycle geometry,
we describe a rather large class of flag domains which
are pseudoconcave.  We underline that further information
concerning properties of these exhaustions in a general setting
would certainly be of interest.   
\subsection {Exhaustions}
Here we discuss two natural methods for constructing
$K_0$-invariant exhaustions of flag domains.  From the
point of view of Levi-geomtry both
have their advantages and disadvantages.  The first 
was introduced by Schmid for a flag domain $D$ which
is a $G_0$-orbit in $Z=G/B$ where $B$ is a Borel subgroup
of $G$ (\cite {S}).  This was generalized to 
\emph{measureable} flag domains in (\cite {SW}, see also
$\S4.6$ in \cite {FHW}).  This type of exhaustion has the
advantage that it is smooth and clearly $q$-convex in
the sense of (\cite {AGr1}).  However, determining
the concavity properties requires root calculations which
vary from case to case and which could be rather subtle.

Exhaustions of a second type were recently constructed in
(\cite {HW}). These are canonically related to a given irreducible
$G$-representation and the Levi-geometry of $\mathcal M_D$.
They have the disadvantage of only being continuous, but
they are $q$-convex in a very strong sense and, as shown
in the sequel, their concavity properties (which are
related to cycle geometry) are more transparent than
those of the exhaustions of the first type.  
\subsubsection {Schmid-Wolf exhaustions}
As above $D$ denotes a flag domain which is a $G_0$-orbit
in a flag manifold $Z=G/Q$.  The first observation relevant
for the construction of the Schmid-Wolf exhaustion is the
fact that the anticanonical bundle $K^{-1}\to Z$ is very ample.
Assuming that we have chosen $G$ to be simply-connected,
this is a $G$-bundle $G\times _\chi \mathcal C$ where
$\chi : Q\to \mathbb C^*$ is an explicitly computable
character.  Recall that if $h$ is a Hermitian bundle metric
(unitary structure) on a line bundle $L\to X$ on a complex manifold
with associated norm-squared function $\Vert \cdot \Vert ^2$, 
then the Chern form $c_1^h(L)$ is the negative of the Levi-form
$\frac{i}{2}\partial \bar \partial \mathrm {log}(\Vert \cdot \Vert ^2)$ of the
exhaustion $\mathrm {log}(\Vert \cdot \Vert ^2)$ of the 
bundle space.

In the case at hand, having fixed a Cartan involution
$\theta $ on $\mathfrak g_0$ which defines the Lie algebra
$\mathfrak k_0$ of the maximal compact subgroup $K_0$, we
extend $\theta $ to a holomorphic involution of $\mathfrak g$
and define the antiholomorphic involution $\sigma :=\tau \theta $,
where $\tau $ is the antiholomorphic involution which
defines the real form $\mathfrak g_0$ on $\mathfrak g$.
The Lie group $G_u$ corresponding to 
$\mathfrak g_u:=\mathrm{Fix}(\sigma )$ is the maximal 
compact subgroup of $G$ which is canonically associated
to the real form $G_0$ with the choice of maximal compact
subgroup $K_0$.

If $L\to Z$ is any nontrivial $G$-bundle on $Z=G/Q$, then
$G$ has exactly two orbits in the bundle space $L$, the
0-section, which corresponds to the fixed point of $Q$
in the neutral fiber, and its complement. In this complement
all $G_u$-orbits are real hypersurfaces and $G_u$ acts
transitively on the 0-section as well.  Define $V_u:=G_u\cap Q$
so that $Z=G/Q=G_u/V_u$. Writing 
$L$ as a $G_u$-bundle, $L=G_u\times _\chi \mathbb C$, we
note that, since the restriction of $\chi $ to $V_u$
is nontrivial and there is 
a unique $S^1$-invariant unitary structure on $\mathbb C$
normalized at $1\in \mathbb C$,
there is an essentially unique $G_u$-invariant unitary
structure on $L$.  The level surfaces of the associated
norm-squared function are exactly the $G_u$-orbits 
in $L$.  If, as in the case $L=K^{-1}$, the bundle $L$
is ample, then Chern form is positive-definite or, equivalently,
from the point of view of the 0-section 
the $G_u$-hypersurface orbits are strongly pseudoconcave.

Let us now consider the restriction of the 
anticanonical bundle $K^{-1}$ to a flag domain $D$.
It is a $G_0$-homogeneous bundle $G_0\times _{\widehat \chi}\mathbb C$.
Here $\widehat \chi $ is a $\mathbb C^*$-valued character
from the $G_0$-isotropy $V_0=G_0\cap Q $. One is of course interested
in the situation where $\widehat \chi $ is $S^1$-valued so
that, as in the case of the $G_u$-bundle on $Z$, the anticanonical 
bundle on $D$ would possess a $G_0$-invariant unitary structure.
The condition for this is called \emph{measurable} (\cite {W},
see also $\S4.5$ in \cite {FHW}).

There are a number of equivalent conditions for $D$ to be
measurable (\cite {W}). Here are those of a less technical
nature:
\begin {enumerate}
\item
$D$ possesses a $G_0$-invariant pseudok\"ahlerian
metric.
\item
$D$ posseses a $G_0$-invariant volume form.
\item
The isotropy group $V_0$ is reductive in the sense that
its complexification $V$ is a complex reductive subgroup
of $G$.
\item
The isotropy group $V_0$ is the centralizer of a compact
subtorus $T_0\subset G_u\cap V_0$ so that
$D$ is realized as a coadjoint orbit.
The symplectic form induced from this realization is
the invariant form defined by the pseudok\"ahlerian metric.
\end {enumerate}
One can show that if one flag domain in $Z$ is measurable,
then all others flag domains in $Z$ are also measurable.  
Thus measurable is a property of the $G_0$-action on the flag manfold $Z$.
For example, flag manifolds $Z=G/B$ are measurable for
any real form.  Furthermore, every flag manifold $Z$ is measurable
if $G_0$ is of Hermitian type. On the other hand it is
seldom the case that a flag manifold $Z$ is measurable for
$G_0=\mathrm {SL}_n(\mathbb R)$. 
 
Now if $Z$ is measurable and $D$ is a flag domain in $Z$,
then we have two Hermitian norm-squared functions on
its anticanonical bundle, the restriction $\Vert \cdot \Vert ^2_u$
of the $G_u$-invariant norm on the full anticanonical bundle of
$Z$ and the $G_0$-invariant function $\Vert \cdot \Vert^2_0$
coming from the coadjoint symplectic form or from the
the pseudok\"ahlerian metric.  The characters which define these
norms are actually defined on the same torus $T_0$ which splits
off of both isotropy groups and on that torus they are the same.
Thus the ratio 
$$
R=\frac{\Vert \cdot \Vert ^2_0}{\Vert \cdot \Vert ^2_u}
$$
is a well-defined function on the base $D$ and one can
show that $\rho :=\mathrm {log}(R)$ is an exhaustion function
of $D$.  We refer to this as the Schmid-Wolf exhaustion
of $D$ (see \cite{S,SW}). Note that since $h_u$ is $G_u$-invariant
and $h_0$ is $G_0$-invariant, $\rho $ is invariant with respect
to the maximal compact subgroup $K_0=G_u\cap C_0$.

The Levi-form of $\rho $ is the difference 
$c_1^{h_u}-c_1^{h_0}$. A direct calculation with roots
shows that $c_1^{h_0}$ is of signature $(q,n-q)$ where 
$q$ is the dimension of the cycle $C_0$ and $n=\mathrm {dim}(D)$.
Since $c_1^{h_u}>0$, the exhaustion $\rho $ is $q$-complete in the sense of 
Andreotti and Grauert, i.e., at every point of $D$
the Levi-form of $\rho $ has at least $n-q$ positive. 
Let us note this result.
\begin {theorem}
The Schmid-Wolf exhaustion of a measurable flag domain $D$ 
is $q$-complete in the sense of Andreotti and Grauert.
\end {theorem}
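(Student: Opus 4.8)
The plan is to package together the facts already assembled in the excerpt: the anticanonical bundle $K^{-1}\to Z$ is ample, hence carries a $G_u$-invariant unitary structure whose Chern form $c_1^{h_u}$ is positive-definite; the measurability of $D$ gives a second, $G_0$-invariant unitary structure on $K^{-1}\vert_D$ with Chern form $c_1^{h_0}$; and the ratio $R = \Vert\cdot\Vert^2_0/\Vert\cdot\Vert^2_u$ descends to a function on $D$ whose logarithm $\rho = \mathrm{log}(R)$ is an exhaustion of $D$. The whole statement then reduces to computing the signature of the Levi-form of $\rho$, which by the standard identity (recalled just above the statement) is $i\partial\bar\partial\rho = c_1^{h_u} - c_1^{h_0}$, and checking that at every point this form has at least $n-q$ positive eigenvalues, which is precisely what $q$-completeness in the sense of Andreotti--Grauert demands.

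**First I would** fix a point of $D$, which by $G_0$-homogeneity we may take to be the base point, and identify the holomorphic tangent space $T_{z_0}D$ with $\mathfrak g/\mathfrak q$ (equivalently with the sum $\mathfrak g^{-} $ of the negative root spaces relative to a suitable parabolic $\mathfrak q$). The key computation is that $c_1^{h_0}$, being the Chern form of the $G_0$-invariant pseudo-Kähler metric attached to the coadjoint realization of $D$, has signature $(q, n-q)$ at $z_0$: here $q = \mathrm{dim}_{\mathbb C} C_0$. This is a root-theoretic fact — one writes the pseudo-Kähler form in terms of the roots occurring in $\mathfrak g/\mathfrak q$, pairs them against the element of $i\mathfrak t_0$ defining the coadjoint orbit, and reads off that exactly $q$ of them give a positive contribution (those spanning the tangent space to the base cycle $C_0$, which is the maximal compact subvariety) and $n-q$ give a negative contribution. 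I would simply cite this as a ``direct calculation with roots,'' since the excerpt itself signals that this is the intended level of detail and that the computation ``vary[ies] from case to case.''

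**Then** the conclusion is immediate: since $c_1^{h_u}$ is positive-definite (ampleness of $K^{-1}$), the difference $c_1^{h_u} - c_1^{h_0}$ is obtained from a positive-definite form by subtracting a form of signature $(q, n-q)$; on the $(n-q)$-dimensional subspace on which $c_1^{h_0}$ is negative-semidefinite the difference is strictly positive, so $i\partial\bar\partial\rho$ has at least $n-q$ positive eigenvalues everywhere on $D$. By definition this says $\rho$ is $q$-complete, and since $\rho$ is a genuine exhaustion (as recalled before the statement), $D$ is $q$-complete in the sense of Andreotti--Grauert. The one point requiring a word of care is the passage from the pointwise signature statement at $z_0$ to the statement at every point of $D$: this is handled by $G_0$-invariance of $\rho$ — both $h_u$ and $h_0$ are invariant under $G_u\cap G_0 \supset K_0$, but in fact $c_1^{h_u} - c_1^{h_0}$ transforms correctly under all of $G_0$ because it is the curvature difference of two metrics on a $G_0$-homogeneous bundle — so the signature is constant along the $G_0$-orbit $D$.

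**The main obstacle** is the root computation giving the signature $(q, n-q)$ of $c_1^{h_0}$; everything else is formal. In a fuller write-up one would need to be careful about the sign conventions linking the Chern form, the coadjoint symplectic form, and the choice of positive system, and one would invoke the description (condition (4) in the measurability list) of $D$ as a coadjoint orbit of an element $\xi \in i\mathfrak t_0^*$ whose centralizer is $V_0$; the number of negative eigenvalues is then the number of roots $\alpha$ in $\mathfrak g/\mathfrak q$ with $\langle \xi, \alpha\rangle < 0$, and matching this count to $\mathrm{dim}_{\mathbb C} C_0 = \mathrm{dim}_{\mathbb C}(K_0/V_0)$ is exactly the content of the Schmid--Wolf analysis. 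Here I would be content to quote \cite{S,SW} (and $\S4.6$ of \cite{FHW}) for this signature count rather than reproduce it.
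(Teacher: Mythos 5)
Your proposal follows essentially the same route as the paper's own (very terse) argument: identify the Levi-form of $\rho$ with $c_1^{h_u}-c_1^{h_0}$, quote the root-theoretic computation giving $c_1^{h_0}$ signature $(q,n-q)$ (the paper likewise only asserts this as ``a direct calculation with roots''), and conclude from $c_1^{h_u}>0$ that at least $n-q$ eigenvalues are positive at each point, which is the Andreotti--Grauert condition. The one step that does not hold as you state it is the globalization from the base point to all of $D$: the form $c_1^{h_u}-c_1^{h_0}=i\partial\bar\partial\rho$ is \emph{not} preserved by $G_0$, because $h_u$ is only $G_u$-invariant, so ``the curvature difference of two metrics on a $G_0$-homogeneous bundle'' does not transform correctly under $G_0$; the two metrics are invariant under two different groups whose intersection is only $K_0$. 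The repair is easier than the claim you tried to make: treat the two summands separately. Since $h_0$ is $G_0$-invariant and $G_0$ acts transitively on $D$, the form $c_1^{h_0}$ has signature $(q,n-q)$ at \emph{every} point of $D$ once it has it at $z_0$; and $c_1^{h_u}$ is positive-definite at every point of $Z$, since $K^{-1}$ is ample and $h_u$ is the $G_u$-invariant metric with $G_u$ acting transitively on $Z$. The pointwise comparison (positive-definite minus a form admitting an $(n-q)$-dimensional negative subspace) then applies at each $z\in D$ with no equivariance statement about the difference needed, and the rest of your argument stands as written.
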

The Schmid-Wolf exhaustions have the advantage that one can
directly apply the Andreotti-Grauert vanishing theorem for 
higher cohomology groups.  One disadvantage is that without
further root-theoretic computation one does not know the
degree of concavity.  Furthermore, one only knows the
existence of these exhaustions on measurable domains.  
 
\subsubsection {Exhaustions via Schubert slices}
\label{Schubert exhaustions}
Here we explain the construction of (\cite {HW}) which
uses cycle space geometry to produce an exhaustion
$\rho _D:D\to \mathbb R^{\ge 0}$ of any given flag domain.
It has the disadvantage of only being continuous, but
it is $q$-pseodoconvex in a strong sense. Its concavity
properties are closely related to the cycle geometry 
of $D$.

The construction of $\rho _D$ requires basic information
concerning \emph{Schubert slices}.  We sketch this here
and refer the reader to $\S9$ of (\cite {FHW}) for details.
In order to define a Schubert slice we must recall that
$G_0$ possesses an Iwasawa-decomposition $G_0=K_0A_0N_0$.
Here $K_0$ is a maximal compact subgroup as above, $A_0$
is an Abelian subgroup noncompact type, $N_0$ is a certain
nilpoint subgroup defined by root-theory and which is normalized
by $A_0$.  Writing $K$, $A$ and $N$ for the complexifications
of these subgroups which are subgroups of $G$, we note the
fundamental fact that the set $KAN$ is a \emph{proper} 
Zariski open subset of $G$.

Now if $C_0:=K_0.z_0$ is a base cycle, then every orbit
of $A_0N_0$ in $D$ must have nonempty intersection with
$C_0$. The following is basic for our discussion 
(see $\S7.3$ in \cite {FHW} for details). 
\begin {theorem}
The set $I$ of points $z\in C_0$ which are such that
the orbit $A_0N_0.z$ is of minimal dimension under all
$A_0N_0$-orbits in $D$ is finite. For every $z\in I$
the orbit $\Sigma :=A_0N_0.z$ has the following properties:
\begin {enumerate}
\item
$\Sigma $ is closed in $D$ and open in $AN.z$ which
is a Schubert cell in $Z$.
\item
The intersection of $\Sigma $ with every cycle $C\in \mathcal M_D$
consists of exactly one point and $C$ is transversal to $\Sigma $
at that point.
\end {enumerate}
\end {theorem}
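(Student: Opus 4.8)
The plan is to carry out the whole argument inside the Bruhat decomposition attached to an Iwasawa-Borel subgroup. First I would fix a Borel subgroup $B$ of $G$ with $AN\subset B$; then $Z=G/Q$ is the finite disjoint union of the Schubert cells $B.w$, each a complex affine cell, and every $AN$-orbit in $Z$ lies in exactly one of them. The first reduction is that every $A_0N_0$-orbit in $D$ meets $C_0=K_0.z_0$: since the Iwasawa decomposition gives $G_0=A_0N_0K_0$ with $A_0N_0\cap K_0=\{e\}$, the composite $K_0\to A_0N_0\backslash G_0\to A_0N_0\backslash D$ is surjective. Hence it suffices to study the orbits $A_0N_0.z$ with $z\in C_0$, and the minimal value of $\dim_{\mathbb R}A_0N_0.z$ taken over $C_0$ is the global minimum over $D$.

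The ``closed in $D$'' half of $(1)$ is then formal. Since $G_0$, and a fortiori $A_0N_0$, is real algebraic and acts algebraically on $Z$, its orbits in $D$ are locally closed, so for any orbit $\Sigma=A_0N_0.z$ the frontier $\mathrm{cl}(\Sigma)\setminus\Sigma$ (closure in $D$) is a union of $A_0N_0$-orbits of strictly smaller dimension. If $z\in I$, i.e. $\Sigma$ realizes the minimal orbit dimension, this frontier must be empty, so $\Sigma$ is closed in $D$.

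Next I would identify $\Sigma$, for $z\in I$, with the open $A_0N_0$-orbit of a Schubert cell of complementary dimension, and deduce finiteness of $I$. The derivative $\mathfrak g\to T_zZ$ of the $G$-action is $\mathbb C$-linear, so $T_z(AN.z)=T_z\Sigma+iT_z\Sigma$. Minimality of $\dim\Sigma$, together with the fact that $KAN$ is Zariski dense in $G$ — which guarantees that for $z$ in the minimal locus the orbit $AN.z$ is in general position with respect to $C_0=K.z_0$ — forces $\dim_{\mathbb C}(AN.z)=n-q$, $\dim_{\mathbb R}\Sigma=2(n-q)$ and the transversality $T_zZ=T_zC_0\oplus T_z(AN.z)$ in $Z$; the first two say precisely that $\Sigma$ is open in the Schubert cell $\mathcal O:=AN.z=B.z$, which then meets $C_0$ transversally at $z$. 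One shows that each such ``minimal'' Schubert cell meets $C_0$ in exactly this one point; since there are only finitely many Schubert cells, $I$ — the union of these finitely many points — is finite. Together with the previous paragraph this gives all of $(1)$.

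It remains to prove $(2)$: that $\Sigma\cap C$ is a single point, and a transversal one, for \emph{every} $C\in\mathcal M_D$ and not merely for $C=C_0$. Here I would exploit that $\mathcal M_D$ is connected and that each $C\in\mathcal M_D$ is a translate $g.C_0$, hence a smooth compact complex submanifold of $D$ of dimension $q$; the intersection number in $H^{*}(Z)$ of $[C]=[C_0]$ with the Schubert class of $\mathrm{cl}(\mathcal O)$ equals $1$, so $\mathcal O\cap C$ is a single transversal point in $Z$ for every cycle. I expect the genuine obstacle to be the last mile: showing that this intersection point lies in the \emph{open} orbit $\Sigma$ rather than on the lower-dimensional set $\mathcal O\setminus\Sigma$, and that the transversal intersection does not degenerate as $C$ ranges over the whole compact family $\mathcal M_D$ — in other words, that the cycles never slip out of the chart onto $\mathrm{cl}(\mathcal O)\setminus\mathcal O$. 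This is settled by a connectedness and continuity argument along $\mathcal M_D$, anchored at the explicit situation $C=C_0$ and using that $\Sigma$ is closed in $D$, so that $\Sigma\cap C$ cannot escape to the boundary; the precise bookkeeping is that of \S7.3 and \S9 of \cite{FHW}.
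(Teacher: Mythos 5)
First, a point of comparison with the source: the paper does not prove this theorem at all — it is quoted as background, with the proof explicitly deferred to \S 7.3 of \cite{FHW} — so your proposal has to be judged on its own merits. Its general orientation is reasonable, and the two opening reductions are correct as you state them: the Iwasawa decomposition $G_0=A_0N_0K_0$ gives $D=A_0N_0.C_0$, so every $A_0N_0$-orbit meets $C_0$; and since $A_0N_0$-orbits are (semi-)algebraic, the frontier in $D$ of an orbit is a union of orbits of strictly smaller dimension, so an orbit of minimal dimension is closed in $D$.

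The difficulty is that the substantive content of the theorem is exactly where your argument asserts rather than proves. (i) The step ``minimality of $\dim_{\mathbb R}\Sigma$ together with Zariski density of $KAN$ puts $AN.z$ in general position, hence $\dim_{\mathbb C}AN.z=n-q$, $\Sigma$ is open in $AN.z$, $AN.z=B.z$, and $T_zZ=T_zC_0\oplus T_z(AN.z)$'' is the heart of part (1), and ``general position'' is not an argument: density of $KAN$ does give $\dim_{\mathbb C}AN.z\ge n-q$ for $z\in C_0$ (because $K.z=C_0$), but it gives no upper bound, no reason why a real orbit of \emph{minimal} dimension should be open in its complex $AN$-orbit (i.e.\ why $T_z\Sigma$ should be a complex subspace), and no reason why $AN.z$ should fill out the whole $B$-cell. (ii) Your finiteness argument for $I$ rests on the claim that each ``minimal'' Schubert cell meets $C_0$ in exactly one point; this is unjustified — a Schubert variety of complementary dimension can meet $C_0$ in several points, possibly on several distinct slices inside the same cell — and the one-point statement per slice is precisely part (2), so as written the argument is circular. (iii) For part (2) you invoke that the intersection number of $[C]=[C_0]$ with the Schubert class equals $1$; there is no reason for this, and it is not the mechanism: the cohomological count is the total number of intersection points of the whole Schubert variety with $C$, summed over all slices it contains, and by itself it cannot localize the count to the particular slice $\Sigma$, exclude contributions on $\mathrm{cl}(\mathcal O)\setminus\mathcal O$ or at non-transversal points, or rule out points of $\mathcal O\cap C$ lying off $\Sigma$. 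You acknowledge this ``last mile'' and defer it, together with the degeneration question along the family $\mathcal M_D$, to \S 7.3 and \S 9 of \cite{FHW} — but that is exactly the statement to be proved, so the proposal has genuine gaps at (i)–(iii).
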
 
For obvious reasons we refer to the orbits $\Sigma $ as
Schubert slices.  We should note that the Schubert cells
in the above statement are meant to be the orbits of
Borel groups $B$ which contain an Iwasawa-factor $AN$.
These are very special Borel groups, being those whose
fixed point is in the (unique) closed $G_0$-orbit in $Z$.

If $r _\Sigma :\Sigma \to \mathbb R^{\ge 0}$ is a strictly
plurisubharmonic function on $\Sigma $, then we define
a plurisubharmonic function 
$\rho _\Sigma :\mathcal M_D\to \mathbb R^{\ge 0}$ by
$\rho _\Sigma (C):=r_\Sigma (\sigma _C)$, where $\sigma _C$
is the unique point of intersection of $C$ and $\Sigma $.
After checking that $\rho _\Sigma $ is a plurisubharmonic function
on $\mathcal M_D$ one might hope that if $\Sigma $ is Stein
and $r_\Sigma $ is an exhaustion, then $\rho _\Sigma $ might be
a plurisubharmonic exhaustion of $\mathcal M_D$.  Simple
examples, e.g., the one interesting flag domain defined
by the $\mathrm {SU}(2,1)$-action on the 3-dimsional manifold
of full flags in $\mathbb C^3$, show that in general $\Sigma $ 
is not Stein. Furthermore, even if $r_\Sigma $ is an exhaustion, $\rho _\Sigma $
may not be an exhaustion.

The difficulties mentioned above can be remedied by simultaneously 
considering a number of Schubert slices.  To do this we start
with strictly plurisubharmonic functions $r_\Sigma $ which arise
in a natural way, in this case associated to an irreducible 
representation of $G$.  For this we recall that if $L\to Z$
is a $G$-line bundle, then the $G$-representation on
$\Gamma (Z,L)$ is irreducible.  Conversely, every irreducible
holomorphic representation of $G$ occurs in this way.

Now recall that a given $\Sigma =A_0N_0.z$ is open in 
the Schubert cell $\mathcal O_S:=B.z\cong \mathbb C^{n-q}$
which closes up to the Schubert variety $S$. Given an
of an ample bundle $L\to Z$, we let $V$ be the
space of sections of $L\vert S$ which are defined
as restrictions of sections of $L$ on $Z$.
Let $s\in V$ be a $B$-eigenvector which is not
identically zero. It follows that $s$ vanishes
exactly on $S\setminus \mathcal O_S$ (see, e.g., \cite{FHW}, $\S7.4C$)
and if we equip $L$ with the canonically defined $G_u$-invariant
norm-squared function $\Vert \cdot \Vert ^2$, then 
the restriction of 
$
r_\Sigma :=s^*(\mathrm{log}(\Vert \cdot \Vert ^2)$ is a 
strictly plurisubharmonic exhaustion of the Schubert cell 
$\mathcal O_S$. The associated function $\rho _\Sigma $
on the cycle space is plurisubharmonic, but normally not
an exhaustion.  Thus we define
$\rho _{\mathcal M_D}$ to be the supremum of the $\rho _\Sigma $
as $\Sigma $ ranges over \emph{all} possible Schubert slices
for a fixed Iwasawa component $A_0N_0$ and over all Iwasawa
decompositions. Since this is a compact family of Schubert slices,
it can be shown that $\rho _{\mathcal M_D}$ is a continuous
plurisubharmonic function.  Using our analysis of the
boundary behavior of the Schubert slices 
(see, e.g., $\S9.2$ in \cite{FHW}), one proves the following
first result.
\begin {proposition}
The function $\rho _{\mathcal M_D}=\mathrm{sup}_\Sigma (\rho_\Sigma)$
associated to an irreducible representation
of $G$ on the space of sections
of an ample bundle on $Z$ is a continuous plurisubharmonic
$K_0$-invariant exhaustion of the cycle space $\mathcal M_D$.
\end {proposition}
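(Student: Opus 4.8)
The plan is to establish the four asserted properties of $\rho_{\mathcal M_D}$ --- continuity, plurisubharmonicity, $K_0$-invariance, and properness --- in that order, with the properness the substantial point. The first move is organizational: I would realize the family of Schubert slices entering the supremum as a \emph{compact} family. Fixing an Iwasawa component $A_0N_0$, the theorem on Schubert slices recalled above provides only finitely many associated slices $\Sigma_1,\dots,\Sigma_N$ in $D$; since every Iwasawa decomposition of $G_0$ with the given $K_0$ is $K_0$-conjugate to this one, the relevant family is $\mathcal S=\bigcup_{i=1}^N K_0\cdot\Sigma_i$, a finite union of compact $K_0$-homogeneous spaces, hence compact. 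Over $\mathcal S$ there is a tautological continuous (indeed real-analytic) family of Schubert cells $\mathcal O_S\cong\mathbb C^{\,n-q}$, Schubert varieties $S$, and $B$-eigensections $s_\Sigma$ of $L\vert S$, which I would normalize so as to depend continuously on $\Sigma\in\mathcal S$ (for instance by $\sup_S\Vert s_\Sigma\Vert^2=1$ for the canonical $G_u$-invariant norm $\Vert\cdot\Vert^2$ on $L$). For a pair $(\Sigma,C)\in\mathcal S\times\mathcal M_D$ let $\sigma^\Sigma_C$ denote the unique point of $\Sigma\cap C$; since, by the quoted theorem, this intersection is a single point and transversal for \emph{every} such pair, the point $\sigma^\Sigma_C$ is globally single-valued, and the implicit function theorem, applied at each transversal intersection, then shows that $(\Sigma,C)\mapsto\sigma^\Sigma_C$ is continuous on all of $\mathcal S\times\mathcal M_D$.

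Granting this, continuity and plurisubharmonicity are routine. As $\sigma^\Sigma_C$ lies in $\Sigma\subset\mathcal O_S$, where $s_\Sigma$ is zero-free, $\rho_\Sigma(C)=r_\Sigma(\sigma^\Sigma_C)=\log\Vert s_\Sigma(\sigma^\Sigma_C)\Vert^2$ is a finite real number depending continuously on $(\Sigma,C)$ --- the section $s_\Sigma$ varies continuously in its space of sections while $\Vert\cdot\Vert^2$ stays fixed --- so by compactness of $\mathcal S$ the supremum $\rho_{\mathcal M_D}=\sup_{\Sigma\in\mathcal S}\rho_\Sigma$ is attained and continuous. For plurisubharmonicity I would use that each $\rho_\Sigma$ is already known to be plurisubharmonic on $\mathcal M_D$: for any holomorphic disk $\varphi:\Delta\to\mathcal M_D$ and every $\Sigma$ one has $(\rho_\Sigma\circ\varphi)(0)\le\frac{1}{2\pi}\int_0^{2\pi}(\rho_\Sigma\circ\varphi)(e^{i\theta})\,d\theta\le\frac{1}{2\pi}\int_0^{2\pi}(\rho_{\mathcal M_D}\circ\varphi)(e^{i\theta})\,d\theta$, and taking the supremum over $\Sigma$ produces the sub-mean-value inequality for $\rho_{\mathcal M_D}$, which, being continuous, is therefore plurisubharmonic.

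For $K_0$-invariance I would observe that an element $k\in K_0$, acting as a biholomorphism of $Z$, normalizes $K_0$, conjugates $A_0N_0$ to another Iwasawa component, and fixes the base cycle $C_0$; it therefore permutes the family $\mathcal S$ and preserves $\mathcal M_D$. Because $K_0\subset G_u$ acts isometrically for $\Vert\cdot\Vert^2$ and the chosen normalization is $K_0$-equivariant, $k$ carries the normalized eigensection of $\Sigma$ to that of $k\Sigma$ up to a unimodular scalar, so $r_{k\Sigma}\circ k=r_\Sigma$ on the corresponding cells; combined with the evident identity $\sigma^{k\Sigma}_{kC}=k\cdot\sigma^\Sigma_C$ this gives $\rho_{k\Sigma}(kC)=\rho_\Sigma(C)$, and passing to suprema over the $K_0$-stable family $\mathcal S$ yields $\rho_{\mathcal M_D}(kC)=\rho_{\mathcal M_D}(C)$.

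The properness of $\rho_{\mathcal M_D}$ is the step I expect to be the main obstacle, and it is the one that genuinely uses the geometry of Schubert slices. I would argue by contradiction. If some sublevel set $\{\rho_{\mathcal M_D}\le c\}$ were not compact, there would be a sequence $(C_n)$ in it admitting no subsequence that converges in $\mathcal M_D$. The $C_n$ lie in the compact space of $q$-cycles on $Z$ homologous to $C_0$, so after passing to a subsequence $C_n\to C_\infty$ there. Since $\mathcal M_D$ is a closed submanifold of the open subset $\mathcal C_q(D)$ of this cycle space, a limit with $\operatorname{supp}(C_\infty)\subset D$ would already belong to $\mathcal M_D$; hence $\operatorname{supp}(C_\infty)$ must meet the boundary $\partial D$ in $Z$. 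At this point I would invoke the analysis of the boundary behavior of Schubert slices carried out in $\S9.2$ of \cite{FHW}: for a family of cycles $C_n\to C_\infty$ degenerating to $\partial D$ in this way, there is a slice $\Sigma$ in the compact family $\mathcal S$ whose intersection points $\sigma^\Sigma_{C_n}=\Sigma\cap C_n$ leave every compact subset of the cell $\mathcal O_S$, accumulating on $S\setminus\mathcal O_S$. As $r_\Sigma$ is an exhaustion of $\mathcal O_S$, this forces $\rho_\Sigma(C_n)=r_\Sigma(\sigma^\Sigma_{C_n})\to+\infty$, whence $\rho_{\mathcal M_D}(C_n)\ge\rho_\Sigma(C_n)\to+\infty$, contradicting $\rho_{\mathcal M_D}(C_n)\le c$. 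The only genuinely hard input is precisely this last assertion --- that the compact family $\mathcal S$ is rich enough that every boundary degeneration of cycles is detected by the cell-boundary of at least one of its slices --- and I would cite the boundary analysis of \cite{FHW} rather than reproduce it.
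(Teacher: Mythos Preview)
Your proposal is correct and follows precisely the route the paper sketches: the paper's own ``proof'' consists only of the remark that compactness of the family of Schubert slices yields continuity and plurisubharmonicity, together with a citation of the boundary analysis in \cite{FHW}, \S9.2, for the exhaustion property---exactly the structure you have filled in. The additional details you supply (the parametrization of $\mathcal S$ as a compact $K_0$-space, the implicit-function argument for continuity of $\sigma^\Sigma_C$, the sub-mean-value passage to the supremum, and the degeneration argument for properness) are the natural expansions of that sketch and introduce no new ideas beyond what the paper indicates.
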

The procedure for transferring $\rho _{\mathcal M_D}$ back to
the domain $D$ is quite natural. For this we let
$\mathfrak X:=\{(z,C)\in D\times \mathcal M_D: z\in C\}$
and denote by $\mu :\mathfrak X\to D$ and 
$\nu :\mathfrak X\to \mathcal M_D$ the canonical projections.
Note that the fiber $\mu ^{-1}(p)=F_p$ can be identified with
the set of cycles in $D$ which contain the point $p$.  Now
define $\rho _{\mathfrak X}:=\rho _{\mathcal M_D}\circ \nu $
and let 
$$
\rho _D(p):=\mathrm {inf}_{F_p}(\rho _{\mathfrak X})\,.
$$
The following can be proved by tracing through the construction
of $\rho _D$ (\cite {HW}).
\begin {proposition}
The function $\rho _D:D\to \mathbb R^{\ge 0}$ is a continuous
$K_0$-invariant exhaustion of $D$ which is $q$-pseudoconvex
in the following sense: For every $r<0$ and every $z$ in the
boundary of the sublevel set $\{\rho _D<r\}$ there exists a
neighborhood $U=U(z)$ and a smooth function $h$ on $U$ such
that $h(z)=r$, $h\le \rho _D\vert U$ and the Levi-form
$L(h)$ restricted to the complex tangent space of $\{h=r\}$
at $z$ has an $(n-q)$-dimensional positive eigenspace.
\end {proposition}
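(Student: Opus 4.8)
The plan is to separate the three structural claims — continuity, the exhaustion property and $K_0$-invariance — from the $q$-pseudoconvexity, proving the former by soft arguments with the double fibration $\mu:\mathfrak X\to D$, $\nu:\mathfrak X\to\mathcal M_D$, and deriving the latter from the strict plurisubharmonicity of the functions $r_\Sigma$ on the $(n-q)$-dimensional Schubert cells together with the transversality of Schubert slices to cycles.

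\textbf{Soft properties.} First I would record that $\mu$ and $\nu$ are proper $G_0$-equivariant holomorphic maps: the $\nu$-fibres are the cycles themselves, compact $q$-dimensional subvarieties depending properly on the cycle, and the $\mu$-fibres $F_p$ are compact since the set of cycles in $\mathcal M_D$ through a fixed point of $D$ is compact (see \cite{FHW}). Continuity of $\rho_D$ then follows from continuity of $\rho_{\mathfrak X}=\rho_{\mathcal M_D}\circ\nu$ and properness of $\mu$ (the fibrewise infimum of a continuous function along the fibres of a proper map is continuous, and on each compact $F_p$ it is attained). The exhaustion property comes from the identity $\{\rho_D\le c\}=\mu\bigl(\nu^{-1}(\{\rho_{\mathcal M_D}\le c\})\bigr)$: the set $\{\rho_{\mathcal M_D}\le c\}$ is compact because $\rho_{\mathcal M_D}$ is an exhaustion of $\mathcal M_D$, and applying the proper maps $\nu^{-1}$ and then $\mu$ preserves compactness; surjectivity of $\mu$ (every point of $D$ lies on a $G_0$-translate of the base cycle $C_0$) shows $\rho_D$ is everywhere finite, and $\rho_D\ge 0$ is inherited from $\rho_{\mathcal M_D}\ge 0$. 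Finally $\mu$ and $\nu$ are $K_0$-equivariant and $\rho_{\mathcal M_D}$ is $K_0$-invariant, so the fibrewise infimum $\rho_D$ is $K_0$-invariant.

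\textbf{Local $q$-pseudoconvexity.} Now fix $z$ on the boundary of a sublevel set $\{\rho_D<r\}$; by continuity $\rho_D(z)=r$, and by compactness of $F_z$ there is a cycle $C^\ast\ni z$ (necessarily in $\mathcal M_D$) realizing the infimum, $\rho_{\mathcal M_D}(C^\ast)=r$. The mechanism for the minorant is that each $\rho_\Sigma$ is already a global lower bound: since $\rho_\Sigma(C)=r_\Sigma(\sigma_C)\le\rho_{\mathcal M_D}(C)$, the function $p\mapsto\inf_{C\ni p}r_\Sigma(\sigma_C)$ (and likewise any finite maximum of such) lies below $\rho_D$ on all of $D$. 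Using that $\rho_{\mathcal M_D}=\sup_\Sigma\rho_\Sigma$ is a supremum over a \emph{compact} family of Schubert slices, one chooses a slice $\Sigma$ (for a suitable Iwasawa decomposition) that attains this supremum at $C^\ast$, and, by the transversality theorem for Schubert slices, $C^\ast$ meets $\Sigma$ transversally at the footpoint $\sigma_{C^\ast}$ in the $(n-q)$-dimensional ambient Schubert cell. Transporting $r_\Sigma$ from the cell to $D$ by the footpoint correspondence and, if necessary, combining finitely many slices dominating $\rho_{\mathcal M_D}$ along the compact set $F_z$, one obtains a function $h$ on a neighbourhood $U$ of $z$ with $h(z)=r$ and $h\le\rho_D$ on $U$; smoothness of $h$ near $z$ follows from the boundary analysis of Schubert slices (\S9.2 of \cite{FHW}) and the smoothness of $r_\Sigma=s^\ast(\log\Vert\cdot\Vert^2)$. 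The Levi form of $h$ at $z$ inherits the strict positivity of $r_\Sigma$ along the $(n-q)$-dimensional transversal (slice) directions, and after the routine bookkeeping locating the required positive directions inside the complex tangent space of $\{h=r\}$ this yields the asserted $(n-q)$-dimensional positive eigenspace.

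\textbf{The main obstacle.} The substantive point is exactly this construction of $h$: arranging simultaneously that it is (i) smooth on a full neighbourhood of $z$ in $D$, (ii) \emph{exactly} equal to $\rho_D(z)$ at $z$, and (iii) a lower bound for $\rho_D$ near $z$, while keeping control of the transversal part of its Levi form. A single slice gives (iii) and the correct Levi count at once, but not (ii), because another cycle through $z$ may have its $\Sigma$-footpoint where $r_\Sigma$ is smaller; this is precisely why one must exploit the compactness of the family of Schubert slices, pass to a finite maximum of the $\rho_\Sigma$'s adapted to the compact set $F_z$ of minimizing cycles, and then check that the footpoint correspondence keeps that maximum smooth and transversal near $z$ (the estimates of \cite{HW}, resting on \S9 of \cite{FHW}). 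I expect this gluing — choosing the adapted family of slices and verifying the regularity of the resulting minorant — rather than any single Levi-form computation, to be the part of the argument that requires real work; once $h$ is in hand the positivity of its Levi form on an $(n-q)$-dimensional subspace is immediate from the strict plurisubharmonicity of $r_\Sigma$.
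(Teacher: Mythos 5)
The heart of this proposition is the local statement: at each boundary point $z$ of a sublevel set one must produce a \emph{smooth} function $h$ near $z$ with $h(z)=r$, $h\le\rho_D$, and an $(n-q)$-dimensional positive Levi eigenspace. The paper itself does not prove this; it explicitly defers to \cite{HW} (``can be proved by tracing through the construction''). Your sketch does not close that step, and the specific device you propose cannot work as stated. A single slice gives the minorant $p\mapsto\inf_{C\ni p}r_\Sigma(\sigma_C)$ but, as you observe, need not touch $\rho_D$ at $z$; your remedy --- a finite maximum of such functions adapted to the compact set of minimizing cycles --- runs into a minimax obstruction: $\max_j\inf_{C\ni p}r_{\Sigma_j}(\sigma_C)\le\inf_{C\ni p}\max_j r_{\Sigma_j}(\sigma_C)$, and the inequality is in general strict, so the required value $r=\inf_{C\ni z}\sup_\Sigma\rho_\Sigma(C)$ is still not attained at $z$ (this is the same reason $\rho_{\mathcal M_D}=\sup_\Sigma\rho_\Sigma$ is only continuous). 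Moreover both the fibrewise infimum and the finite maximum destroy smoothness, whereas the statement demands a smooth $h$ whose Levi form is evaluated at $z$; asserting that ``the footpoint correspondence keeps that maximum smooth and transversal'' and calling the rest ``routine bookkeeping'' resting on ``the estimates of \cite{HW}'' means the crux --- the construction of the smooth supporting function from the cycle/slice geometry at the minimizing cycle --- is assumed rather than proved. This is a genuine gap, and it is exactly the content the paper attributes to \cite{HW}.

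The soft half also contains an unjustified claim, though it is repairable. You take $F_p$ compact and $\mu:\mathfrak X\to D$ proper; this is not evident and in general false: a sequence of cycles through a fixed interior point $p$ can degenerate to a cycle meeting $\mathrm{bd}(D)$, i.e.\ leave $\mathcal M_D$, so $F_p$ need not be compact. In addition, your general lemma that a fibrewise infimum of a continuous function along a proper map is continuous requires openness of $\mu$ for the upper semicontinuity half. The correct route uses the exhaustion property of $\rho_{\mathcal M_D}$: the infimum over $F_p$ equals the infimum over the compact set $F_p\cap\{\rho_{\mathcal M_D}\le\rho_D(p)+1\}$ (hence is attained), upper semicontinuity follows by moving a nearly minimizing cycle $C\ni p$ with elements $g\to e$ in $G$ carrying $p$ to nearby points (such translates stay in $\mathcal M_D$), and lower semicontinuity plus the exhaustion property follow from compactness of $\{\rho_{\mathcal M_D}\le c\}$ together with properness of $\nu$ over such sets; $K_0$-invariance is as you say. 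So continuity, exhaustion and invariance can be salvaged along your lines, but the $q$-pseudoconvexity argument needs a different construction than the one you outline.
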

It would be useful if either $\rho _D$ could be smoothed to
an exhaustion which is $q$-pseudoconvex in the sense of
Andreotti-Grauert or if the finiteness/vanishing theorems
of Andreotti-Grauert could be proved under the assumption
of a continuous exhaustion with the above pseudoconvexity 
property. 
\subsubsection {Flag domains are q-pseudoflat}
In (\cite {HN}) we introduced the notion of q-pseudoflatness
as a weakening of both q-Leviflatness and q-pseudoconcavity
(See \cite{HSt} for elementary complex analytic properties 
of such manifolds.).  By definition a q-pseudoflat (connected) 
complex manifold $X$ is required to contain a relatively compact 
open set $Z$ such that every point $p$ of its closure 
$\mathrm {cl}(Z)$ is contained in a $q$-dimensional 
(locally defined) analytic set $A_p$ which itself is
contained in $\mathrm {cl}(Z)$. Examples
on the pseudoconvex side which possess exhaustions by
plurisubharmonic functions whose level sets are foliated by
$q$-dimensional leaves are given by the Lie groups in 
(\ref{lie groups}), the nilmanifolds in (\ref{nilmanifolds})
and the flag domains in (\ref{pseudoconvex flag domains}).  
In the flag domain case the number $q$ is the dimension
of the base cycle $C_0$. An optimal dichotomy might be
that a flag domain is either $q$-Leviflat as in 
Theorem \ref{pseudoconvex} or $q$-pseudoconcave.  We
have stated a weakened version of this conjecture in
(\ref{flag domain background}) and prove the pseudoconcavity
(without any particular degree $q$) of certain flag domains 
in (\ref{pseudoconcavity}). Here we note the following general
result on $q$-pseudoflatness. Its proof follows
by direct inspection of the definition of an exhaustion
defined by the Schubert-slice method.
\begin {proposition}
Let $\rho _D:D\to \mathbb R^{\ge 0}$ be an exhaustion 
of a flag domain $D$ which is defined
by the Schubert-slice method and $D_r=\{\rho_D<r\}$ be
a sublevel set.  Then every $p\in \mathrm{bd}(D_r)$ is 
contained in a cycle $C\in \mathcal M_D$ which itself is
contained in $\mathrm {cl}(D_r)$.  In particular, 
$D$ is $q$-pseudoflat.
\end {proposition}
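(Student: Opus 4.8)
The plan is to unwind the definition of the Schubert-slice exhaustion $\rho_D$ and exploit the fact that it was built as an infimum over cycles. Recall from the construction in $\S\ref{Schubert exhaustions}$ that $\rho_D(p)=\mathrm{inf}_{F_p}(\rho_{\mathfrak X})$, where $F_p=\mu^{-1}(p)$ is the set of cycles $C\in\mathcal M_D$ passing through $p$ and $\rho_{\mathfrak X}=\rho_{\mathcal M_D}\circ\nu$. The first step is to observe that $F_p$ is compact: it is the fiber of the proper projection $\mu:\mathfrak X\to D$ over $p$, and $\mathfrak X$ is closed in $D\times\mathcal M_D$ with $\mu$ proper since the cycle space $\mathcal M_D$ is, in the relevant component, a closed submanifold of $\mathcal C_q(D)$ and cycles are compact. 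Since $\rho_{\mathfrak X}$ is continuous, the infimum defining $\rho_D(p)$ is attained: there is a cycle $C=C(p)\in\mathcal M_D$ with $p\in C$ and $\rho_{\mathcal M_D}(C)=\rho_D(p)$.

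Next I would check that this attaining cycle $C(p)$ lies entirely in $\mathrm{cl}(D_r)$ when $p\in\mathrm{bd}(D_r)$, i.e. when $\rho_D(p)=r$. For any point $z\in C(p)$, the cycle $C(p)$ itself belongs to $F_z$ (it passes through $z$), so by definition of the infimum $\rho_D(z)\le\rho_{\mathcal M_D}(C(p))=\rho_D(p)=r$. Hence $z\in\{\rho_D\le r\}=\mathrm{cl}(D_r)$ (using continuity of $\rho_D$ and that $\{\rho_D\le r\}$ is the closure of $\{\rho_D<r\}$ for the values $r$ in question, or simply $\{\rho_D\le r\}\subseteq\mathrm{cl}(D_r)$ as a set-theoretic inclusion that suffices here). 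Since $z\in C(p)$ was arbitrary, $C(p)\subset\mathrm{cl}(D_r)$. This gives the first assertion: every boundary point of $D_r$ lies on a cycle contained in $\mathrm{cl}(D_r)$.

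Finally, for the $q$-pseudoflatness conclusion I would take the relatively compact open set in the definition to be a sublevel set $Z:=D_r$ for a regular value $r$ large enough that $D_r$ is nonempty (such exist since $\rho_D$ is an exhaustion). For $p\in\mathrm{cl}(D_r)=\mathrm{cl}(Z)$ there are two cases. If $p\in D_r$, then since $\rho_D$ is an exhaustion one can pick any cycle through $p$ in $\mathcal M_D$ and intersect it with a slightly larger sublevel set, or more directly: the attaining cycle $C(p)$ satisfies $\rho_{\mathcal M_D}(C(p))=\rho_D(p)<r$, and the argument of the previous paragraph shows $C(p)\subset\{\rho_D\le\rho_D(p)\}\subset D_r\subset\mathrm{cl}(Z)$. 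If $p\in\mathrm{bd}(D_r)$, the previous paragraph already produced $C(p)\subset\mathrm{cl}(D_r)=\mathrm{cl}(Z)$ with $p\in C(p)$. In either case $p$ lies on a $q$-dimensional analytic subset $C(p)$ (each $C\in\mathcal M_D$ has dimension $q=\dim_\mathbb C C_0$) contained in $\mathrm{cl}(Z)$, which is exactly the defining property of $q$-pseudoflatness.

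The only genuinely substantive point is the compactness of $F_p$ and hence the attainment of the infimum; everything else is a formal manipulation of the inf-definition. I expect the main obstacle to be making precise that $\mathfrak X\to D$ is proper in the appropriate analytic category — this is where one invokes that $\mathcal M_D$ is a closed submanifold of the Barlet cycle space and that the incidence variety $\mathfrak X=\{(z,C):z\in C\}$ is closed with compact fibers, facts which are recorded in the references cited earlier (\cite{FHW}, \cite{HoH}, \cite{HW}). Granting that, the proof is a two-line infimum argument, consistent with the remark in the text that it "follows by direct inspection."
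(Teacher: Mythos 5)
Your core argument is the paper's argument: take a cycle $C$ attaining the infimum that defines $\rho_D(p)$, observe that $\rho_{\mathfrak X}(z,C)=\rho_{\mathcal M_D}(C)$ for every $z\in C$, conclude $\rho_D(z)\le\rho_D(p)=r$ and hence $C\subset\mathrm{cl}(D_r)$, and get $q$-pseudoflatness by using a sublevel set as the relatively compact set. (The tacit identification of $\{\rho_D\le r\}$ with $\mathrm{cl}(D_r)$, which you flag, is made silently in the paper as well.)

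The one place your write-up goes wrong is the justification you give for attainment of the infimum. The fiber $F_p$ is in general \emph{not} compact and $\mu:\mathfrak X\to D$ is \emph{not} proper: cycles of $\mathcal M_D$ through a fixed point $p$ can degenerate to cycles in the closure of $G.C_0$ which touch $\mathrm{bd}(D)$, and such limits lie outside $\mathcal M_D$ (which is closed only in $\mathcal C_q(D)$, itself open in $\mathcal C_q(Z)$). Concretely, for $G_0=\mathrm{SU}(2,1)$ acting on $Z=\mathbb P_2$ with $D$ the open orbit of positive points (the complement of the closed ball, base cycle the positive projective line), $\mathcal M_D$ is the dual ball of negative points in $\mathbb P_2^*$ and $F_p$ is its intersection with the projective line dual to $p$, i.e.\ a disk --- noncompact. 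Also, closedness of $\mathfrak X$ in $D\times\mathcal M_D$ plus compactness of the individual cycles does not yield properness of $\mu$ unless $\mathcal M_D$ is compact. Fortunately compactness of $F_p$ is not what you need: the incidence condition $p\in C$ is closed, so $F_p$ is a nonempty closed subset of $\mathcal M_D$, and by the preceding proposition $\rho_{\mathcal M_D}$ is a continuous exhaustion of $\mathcal M_D$; its restriction to $F_p$ is therefore again an exhaustion and attains its minimum. With that repair (the paper simply treats the attainment as part of the Schubert-slice construction), your proof coincides with the one in the text.
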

\begin {proof}
If $\rho _D(p)=r$, then by definition there exists a 
cycle $C_p\in \mathcal M_D$ with $p\in C_p$ such that 
$$
\rho_\mathfrak X(p,C)=r=
\mathrm {min}\{\rho _\mathfrak X(p,C): C\in F_p\}\,.
$$ 
Now consider another point $\widehat p\in C$ and note
that, since $C\in F_{\widehat p}$ and
$$
\rho _\mathfrak X(p,C)=\rho _\mathfrak X(\widehat p,C)
=\rho _{\mathcal M_D}(C)\, ,
$$
it follows that $\rho _D(\widehat p)\le \rho _D(p)=r$, i.e.,
$C\subset \mathrm{cl}(D_r)$.
\end {proof}
\subsection {Pseudoconcavity via cycles} \label{pseudoconcavity}
Here we prove that $D$ is pseudoconcave if it is cycle
connected in a certain strong sense which we refer to
as \emph{generically 1-connected}.  To define this notion first
note that for $p$ an arbitrary point in $D$ 
and $C$ an arbitrary cycle in $G.C_0$ the set of cycles 
in $G.C_0$ which contain $p$ is just the orbit 
$G_{p}.C$ of the $G$-isotropy group at $C$.  We therefore
say that $D$ is generically 1-connected if $C$ has
nonempty intersection with the open $G_{p}$-orbit in $Z$.
One checks that this notion does not depend on the choice
of $p$ or $C$.

Throughout this paragraph we assume that $D$ is 
generically 1-connected. Under this assumption we will 
show that $D$ is pseudoconcave in the sense of Andreotti, i.e.,
that $D$ contains a relatively compact open subset 
$\mathrm{int}(\mathcal K)$ such that for every point
of its closure $\mathcal K$ there is a 1-dimensional
holomorphic disk $\Delta $ with $p$ at its center such
that $\mathrm {bd}(\Delta)$ is contained in 
$\mathrm{int}(\mathcal K)$.   In fact the construction
is such that 
every $p\in \mathcal K$ is contained in a cycle $C$ in
$\mathcal M_D$ which is itself contained in $\mathcal K$.
This cycle has the further property that 
$C\cap \mathrm{int}(\mathcal K)\not=\emptyset$.  Hence,
in a certain sense one may regard $D$ as
being q-pseudoconcave.  At the present time, however,
we are not able to replace $C$ with a q-dimensional polydisk.

The compact set $\mathcal K$ is constructed as follows.
For $p_0$ an arbitrary point in $C_0$ let $U$ be a relatively
compact open neighborhood of the identity in the 
isotropy subgroup $G_{p_0}$. Choose $U$ to be sufficiently small
so that  
$$
\mathcal K_{p_0}:=\{u(p); p\in C_0,\ u\in \mathrm{cl}(U)\}
$$
is contained in $D$ and let
$$
\mathcal K:=\cup_{k\in K_0}k.K_{p_0}\,.
$$

\begin {proposition}
The $K_0$-invariant set $\mathcal K$ is a compact subset
of $D$ which is the closure of its interior 
$\mathrm{int}(\mathcal K)$. The base cycle $C_0$ is
contained in $\mathrm {int}(\mathcal K)$ and every point 
of $\mathcal K$ is contained in a cycle $C$ which is 
contained in $\mathcal K$ and which has nonempty 
intersection with $C_0$.
\end {proposition}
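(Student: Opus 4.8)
The plan is to verify the three assertions of the Proposition in turn, working directly with the explicit description of $\mathcal K$ as a union of translates of $\mathcal K_{p_0}$.

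First I would establish compactness and the ``closure of its interior'' property. The set $\mathrm{cl}(U)$ is compact in $G_{p_0}$ and $C_0$ is compact, so the image $\mathcal K_{p_0}=\{u(p):p\in C_0,\ u\in\mathrm{cl}(U)\}$ is compact as the image of $\mathrm{cl}(U)\times C_0$ under the (continuous) action map; since $\mathcal K_{p_0}\subset D$ by the choice of $U$, and $K_0$ is compact, $\mathcal K=\bigcup_{k\in K_0}k.\mathcal K_{p_0}$ is again compact and contained in $D$. For the interior claim, the key point is that $U$ is open in $G_{p_0}$ and $G_{p_0}$ is a complex subgroup with only finitely many orbits in $Z$; hence a neighborhood of $p_0$ in $D$ is swept out by $U.p_0$ together with the action of $G_{p_0}$, and in fact $U.C_0$ contains an open subset of $D$ (one uses that $C_0$ is a $K_0$-orbit and $G_{p_0}$ is transverse to it in the sense already exploited for Schubert slices). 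Since $\mathcal K_{p_0}$ is the closure of $\mathrm{int}(U).C_0$ intersected appropriately, $\mathcal K_{p_0}=\mathrm{cl}(\mathrm{int}(\mathcal K_{p_0}))$, and applying the compact group $K_0$ preserves this, giving $\mathcal K=\mathrm{cl}(\mathrm{int}(\mathcal K))$.

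Next I would show $C_0\subset\mathrm{int}(\mathcal K)$. Taking $u$ to be the identity shows $C_0\subset\mathcal K$. To see that $C_0$ lies in the interior, note that for each $p\in C_0$ the identity lies in the open set $U\subset G_{p_0}$, and one can produce a full-dimensional open neighborhood of $p$ inside $\mathcal K_{p_0}$ by combining the $U$-directions (transverse to $C_0$) with the $K_0$-directions along $C_0$; more precisely, for a suitable $k\in K_0$ with $k.p_0=p$ one has $p=k.(e\cdot p_0)\in k.\mathrm{int}(\mathcal K_{p_0})\subset\mathrm{int}(\mathcal K)$. This is essentially the same transversality input as above.

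Finally, and this is where the hypothesis enters, I would prove that every point $p\in\mathcal K$ lies on a cycle $C\in\mathcal M_D$ with $C\subset\mathcal K$ and $C\cap C_0\neq\emptyset$. By $K_0$-invariance of $\mathcal K$ it suffices to treat $p\in\mathcal K_{p_0}$, so write $p=u(p_1)$ with $u\in\mathrm{cl}(U)$ and $p_1\in C_0$. Apply $u$ to the base cycle: set $C:=u(C_0)=u.C_0$. Since $C_0\in\mathcal M_D$ and $u\in G$, $C\in G.C_0$, and connectedness considerations together with $u$ being close to the identity (so $C$ stays in the $G_0$-orbit of $C_0$ in $\mathcal C_q(D)$) put $C\in\mathcal M_D$; by construction $p=u(p_1)\in C$, and $C=u(C_0)\subset\mathcal K_{p_0}\subset\mathcal K$ by the very definition of $\mathcal K_{p_0}$. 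It remains to check $C\cap C_0\neq\emptyset$: here one uses generic $1$-connectedness, which guarantees that the cycle $C$ through $p$ meets the open $G_{p_1}$-orbit, and in particular that $u(C_0)$ and $C_0$ have a common point (for $u$ in a sufficiently small neighborhood $U$ one can arrange this intersection explicitly, e.g. $p_1\in u(C_0)\cap C_0$ when $u$ fixes $p_1$, and in general the transversality of cycles to the relevant slices forces a nonempty intersection). The main obstacle I anticipate is precisely this last point — showing that the nearby cycle $u(C_0)$ still meets $C_0$ and still lies in $\mathcal M_D$ — since it is here that one must invoke generic $1$-connectedness rather than merely the local structure of $G_{p_0}$-orbits; the compactness/interior statements are comparatively routine once the transversality of $C_0$ to the $G_{p_0}$-directions is in hand.
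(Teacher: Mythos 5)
Your compactness argument matches the paper's, and your choice of supporting cycle $C=u(C_0)$ (equivalently $kuk^{-1}(C_0)$ before the $K_0$-reduction) is the right one; but the two places where you actually have to work contain genuine gaps. First, the interior claims. Your justification rests on a transversality statement that is false exactly where you need it: the vector fields coming from $\mathfrak g_{p_0}$ \emph{vanish} at $p_0$ (indeed $U.p_0=\{p_0\}$ and, more generally, $G_{p_0}$ fixes $p_0$), so there are no ``$U$-directions transverse to $C_0$'' at $p_0$, and the step $p_0\in \mathrm{int}(\mathcal K_{p_0})$, which you use via $p=k(p_0)\in k.\mathrm{int}(\mathcal K_{p_0})$, is unjustified. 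Note also that your argument for $\mathcal K=\mathrm{cl}(\mathrm{int}(\mathcal K))$ and $C_0\subset \mathrm{int}(\mathcal K)$ nowhere uses generic $1$-connectedness, yet without that hypothesis these statements are false (in the Hermitian case $C_0=\{p_0\}$ one gets $\mathcal K=K_0.p_0=\{p_0\}$, with empty interior), so no hypothesis-free argument of this kind can succeed. The paper uses the hypothesis precisely here: $C_0$ meets the open $G_{p_0}$-orbit $\Omega$; for $q\in C_0\cap \Omega$ the set $U.q$ is open in $Z$ (the orbit map onto an open orbit is open) and contained in $\mathcal K_{p_0}$, so such $q$ lie in $\mathrm{int}(\mathcal K)$; transitivity of $K_0$ on $C_0$ then gives $C_0\subset \mathrm{int}(\mathcal K)$, and since $C_0\cap\Omega$ is a nonempty Zariski-open, hence dense, subset of $C_0$, every $z=ku(p)\in\mathcal K$ is a limit of points $ku'(p_n)$ with $p_n\in C_0\cap\Omega$, $u'\in U$, which are interior; this yields $\mathcal K=\mathrm{cl}(\mathrm{int}(\mathcal K))$.

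Second, the intersection $C\cap C_0\neq\emptyset$, which you single out as the main obstacle and leave to hand-waving (``when $u$ fixes $p_1$'' -- but $u$ fixes $p_0$, not $p_1$ -- and an appeal to transversality to slices, which bears on Schubert slices, not on $C_0$), is in fact immediate and requires no genericity: $u\in\mathrm{cl}(U)\subset G_{p_0}$ fixes $p_0$ and $p_0\in C_0$, hence $p_0=u(p_0)\in u(C_0)\cap C_0$ (in the unreduced form, $k(p_0)\in kuk^{-1}(C_0)\cap C_0$). Finally, your parenthetical reason for $u(C_0)\in\mathcal M_D$ is off the mark: $u$ is generally not in $G_0$, so $u(C_0)$ need not lie in the $G_0$-orbit of $C_0$, and $\mathcal M_D$ is not that orbit anyway; what one would use is that $U$ may be chosen connected, so the family $u_t(C_0)$, joining $C_0$ to $u(C_0)$ inside $\mathcal K_{p_0}\subset D$, stays in the connected component of $G.C_0\cap\mathcal C_q(D)$ containing $C_0$ -- though the statement itself only asks for a cycle, and the paper's proof does not dwell on this point either.
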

\begin {proof}
Since $\mathcal K=\{ku(p);k\in K_0,u\in \mathrm {cl}(U),p\in C_0\}$
and $K_0$, $\mathrm{cl}(U)$ and $C_0$ are compact, it is
immediate that $\mathcal K$ is compact.  If $z=ku(p)\in \mathcal K$,
then we let $\{p_n\}$ be a sequence in $C_0$ which is contained
in in the open orbit of $G_p$ and which converges to $p$.  
It follows that $z_n:=ku(p_n)$
is in the interior of $\mathcal K$ and $z_n\to z$.  Thus
$\mathcal K$ is the closure of its interior 
$\mathrm{int}(\mathcal K)$.  By definition every point
of the intersection of $C_0$ with the open $G_{p_0}$-orbit
is in $\mathrm{int}(\mathcal K)$. Thus, since $K_0$ acts
transitively on $C_0$, it follows that 
$C_0\subset \mathrm{int}(\mathcal K)$.  Finally, every
point $z\in \mathcal K$ is of the form $z=kuk^{-1}k(p_1)$,
where $p_1\in C_0$. Thus $z\in kuk^{-1}(C_0):=C\subset \mathcal K$. 
Since $kuk^{-1}$ fixes $k(p_0)$, it follows that 
$C\cap C_0\not=\emptyset$.
\end {proof}
In order to replace the \emph{supporting cycles}
with q-dimensional polydisks, the construction of 
$\mathcal K$ may have to be refined.  However, without
further refinements we are able to construct 1-dimensional
supporting disks at each boundary point of $\mathcal K$.
\begin {theorem}
Generically 1-connected flag domains are pseudoconcave.
\end {theorem}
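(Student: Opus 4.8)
The plan is to take the compact set $\mathcal K$ constructed in the preceding proposition and to verify the defining condition of Andreotti pseudoconcavity directly: at each $p\in\mathcal K$ we must produce a holomorphic disk centered at $p$ whose boundary lies in $\mathrm{int}(\mathcal K)$. By the proposition we already know that such a $p$ lies on a cycle $C=kuk^{-1}(C_0)\in\mathcal M_D$ with $C\subset\mathcal K$ and $C\cap C_0\neq\emptyset$; moreover $C\cap\mathrm{int}(\mathcal K)\neq\emptyset$ since the point of $C_0$ on $C$ can be taken in the open $G_{p_0}$-orbit. So the whole game is local-to-the-cycle: I want a disk through $p$, sitting inside $C$ (hence inside $\mathcal K$), whose boundary circle runs through $\mathrm{int}(\mathcal K)$.

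First I would reduce to a statement about a single cycle $C$, which is a compact complex submanifold of $D$ of dimension $q$, in fact $C\cong C_0$ is a (compact, simply-connected) homogeneous rational manifold. The key extra input is generic $1$-connectedness: for the point $p\in C$, the cycles through $p$ form the orbit $G_p.C$, and by hypothesis $C$ meets the open $G_p$-orbit in $Z$; this is exactly what guarantees that the intersection $C\cap\mathrm{int}(\mathcal K)$ is nonempty and, more importantly, that one can move within $\mathcal M_D$ near $C$ while keeping $p$ on the cycle. Next I would choose a point $q_0\in C\cap\mathrm{int}(\mathcal K)$ and connect $p$ to $q_0$ inside $C$ by a rational curve: since $C$ is a flag manifold it is covered by rational curves, and any two points of a flag manifold lie on a connected chain of $\mathbb P^1$'s; in fact through a generic pair of points there is a single irreducible rational curve. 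Restricting attention to one $\mathbb P^1\subset C$ through $p$ and through (or arbitrarily near) $q_0$ gives a holomorphic map $\varphi:\mathbb P_1\to C\subset\mathcal K$ with $p$ in the image and some point of the image in $\mathrm{int}(\mathcal K)$.

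Then I would extract the disk. Parametrize $\mathbb P_1$ so that $\varphi(0)=p$ and so that a point $\varphi(a)\in\mathrm{int}(\mathcal K)$ with $|a|$ small; by an automorphism of $\mathbb P_1$ fixing $0$ I can arrange that the entire circle $|w|=r$ for suitable small $r$ is mapped into $\mathrm{int}(\mathcal K)$ — here I use that $\mathrm{int}(\mathcal K)$ is open and that, by the homogeneity built into the construction ($\mathcal K$ is $K_0$-invariant and $C_0\subset\mathrm{int}(\mathcal K)$), a whole small neighborhood of the relevant point of $C$ on the cycle lies in $\mathrm{int}(\mathcal K)$, so a full circle close to $0$ can be pushed into it after a rotation/Möbius adjustment. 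Precomposing $\varphi$ with $w\mapsto rw$ yields $\psi:\Delta\to\mathcal K$ with $\psi(0)=p$ and $\psi(\mathrm{bd}(\Delta))\subset\mathrm{int}(\mathcal K)$, which is precisely the required supporting disk; the ``$q$-dimensional'' addendum would come from replacing the single $\mathbb P_1$ by a $q$-dimensional family, but the theorem as stated only needs the $1$-dimensional version.

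The main obstacle is the last step: arranging that an \emph{entire boundary circle} of the disk, not merely one point, lands in $\mathrm{int}(\mathcal K)$. Producing a rational curve through $p$ meeting $\mathrm{int}(\mathcal K)$ is cheap; the subtlety is that $\mathcal K$ was built rather crudely (a $K_0$-sweep of $\mathrm{cl}(U)\cdot C_0$), so I must check that the portion of the cycle $C$ lying in $\mathrm{int}(\mathcal K)$ is large enough — ideally containing an open piece of $C$ near a point — so that after an automorphism of $\mathbb P_1$ a full circle fits inside it. This is where generic $1$-connectedness really earns its keep: it forces $C$ to meet the \emph{open} $G_p$-orbit, hence $C\cap\mathrm{int}(\mathcal K)$ is open and nonempty in $C$, and the disk can be chosen with its boundary in that open set. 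Verifying this openness and the Möbius adjustment carefully is the heart of the argument; everything else is the bookkeeping already done in the construction of $\mathcal K$.
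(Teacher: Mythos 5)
Your proposal is correct and follows the same overall strategy as the paper: restrict to a supporting cycle $C\subset\mathcal K$ through the given point, find a rational curve in $C$ through that point which meets $\mathrm{int}(\mathcal K)$, and obtain the supporting disk by cutting away a small disk about a point of the curve lying in $\mathrm{int}(\mathcal K)$. Your ``Möbius adjustment fixing $0$'' is exactly the paper's definition of $\Delta$ as the complement in $Y\cong\mathbb P_1$ of a small closed disk about a point $y_0\in\mathrm{int}(\mathcal K)$, rewritten in the coordinate centered at $p$; the boundary circle ends up being a small circle around the interior point rather than a circle ``close to $0$'', but the transformation you describe exists and the step is sound, since $C\subset\mathcal K=\mathrm{cl}(\mathrm{int}(\mathcal K))$ and $\mathrm{int}(\mathcal K)$ is open. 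The genuine difference lies in how the curve is produced. You invoke rational connectedness of the flag manifold $C$ (an irreducible rational curve through the fixed point $p$ and a generic, hence some interior, point), together with the fact that $C\cap\mathrm{int}(\mathcal K)$ is open and nonempty --- nonempty because $C\cap C_0\ne\emptyset$ and $C_0\subset\mathrm{int}(\mathcal K)$, which is where generic $1$-connectedness actually enters via the preceding proposition, not quite as you phrase it. The paper instead constructs the curve by hand: $Y$ is the closure of the orbit of $z$ under a one-parameter unipotent subgroup of $gKg^{-1}$ chosen to meet the open orbit of the $gKg^{-1}$-isotropy at $z$, and $Y$ is then translated by that isotropy group so as to pass arbitrarily near $z_0\in C\cap C_0\subset\mathrm{int}(\mathcal K)$. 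Your route is shorter but leans on a nontrivial general fact (irreducible rational curves through a prescribed point, or at least density of the points so reachable, which should be justified, e.g.\ by homogeneity of $C$ or by the paper's isotropy-translation trick); the paper's route is elementary and stays inside the Lie-theoretic framework. With either justification of the curve, the disk extraction and the bookkeeping with $\mathcal K$ go through as you describe.
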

\begin {proof}
Let $z\in \mathrm {bd}(\mathcal K)$, choose 
$C=gC_0g^{-1}\in \mathcal M_D$
to be contained in $\mathcal K$ with $z\in C$ and 
$C\cap C_0\not=\emptyset$, and let $z_0$ be in this 
intersection. Choose a 1-parameter unipotent subgroup
of $gKg^{-1}$ whose orbit of $z_1$ has nonempty intersection
with the open orbit of the isotropy subgroup of $gKg^{-1}$
at $z_1$ and define $Y$ to be the closure of this orbit.
After an injective normalization, $Y$ is just a copy
of $\mathbb P_1$.  Replacing $Y$ by $h(Y)$ where $h$
is in the $gKg^{-1}$-isotropy group at $z_1$, i.e., 
by conjugating the 1-parameter subgroup by $h$, we may 
assume that $Y$ contains points $y_0$ which are arbitrarily 
near $z_0$.  Choose $y_0$ in $\mathrm{int}(\mathcal K)$ and define
$\Delta $ to be the complement in $Y$ of the closure of a 
disk about $y_0$ which is likewise in $\mathrm{int}(\mathcal K)$.
\end {proof}
Our feeling is that most flag domains \emph{are} 
1-connected and that the domains which are not
1-connected can be classified by elementary root
computations.  The argument in the proof of the
following remark gives some indication of this.
\begin {proposition}
Every flag domain of $\mathrm {SL}_n(\mathbb R)$ is
1-connected.
\end {proposition}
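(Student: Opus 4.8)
The plan is to verify the criterion of generic 1-connectivity directly for $G_0=\mathrm{SL}_n(\mathbb{R})$ by an explicit root- and Schubert-theoretic analysis. Recall that a flag domain $D=G_0/V_0$ sits inside a flag manifold $Z=G/Q$ with $G=\mathrm{SL}_n(\mathbb{C})$, and a flag manifold for $\mathrm{SL}_n$ is just a manifold of partial flags $0\subset E_{d_1}\subset\cdots\subset E_{d_r}\subset\mathbb{C}^n$ of a fixed signature. The base cycle $C_0=K.z_0$ is the $K$-orbit of a point $z_0$, where $K=\mathrm{SO}_n(\mathbb{C})$ is the complexification of $K_0=\mathrm{SO}_n(\mathbb{R})$, and $C_0$ is compact because it is a closed $K$-orbit. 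To prove generic 1-connectivity I must show: for an arbitrary point $p\in D$ and an arbitrary cycle $C\in G.C_0$, the cycle $C$ meets the open $G_p$-orbit in $Z$, where $G_p=Q^g$ is a conjugate of the parabolic $Q$ (the $G$-isotropy at $p$). By the remark preceding the proposition this is independent of the choices of $p$ and $C$, so it suffices to check it for one convenient pair; I would take $C=C_0$ and $p$ a generic point of $D$.

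First I would set up coordinates so that $p$ corresponds to a flag in ``general position'' relative to the defining data of $C_0$. Since $G_p$ is the stabilizer in $\mathrm{SL}_n(\mathbb{C})$ of the flag $p$, its orbits in $Z$ are the relative-position strata (a finite stratification, by the $B\times B$-orbit decomposition of $G$ made $Q$-invariant), and the unique open one consists of flags in generic position relative to $p$. So what must be shown is: $C_0$ contains a flag which is in generic relative position to a fixed generic flag $p$. Equivalently, the compact variety $C_0=\mathrm{SO}_n(\mathbb{C})/(\mathrm{SO}_n(\mathbb{C})\cap Q)$ is not swept entirely into the proper closed ``non-generic'' locus $Z\setminus (G_p\text{-open orbit})$. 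The natural way to see this is dimension-count plus a tangent-space (transversality) computation: I would exhibit one explicit flag $c\in C_0$ and an explicit generic flag $p$ for which the incidence/relative-position conditions defining ``generic'' are satisfied, using the symmetric bilinear form preserved by $K$. Concretely, one can pick an isotropic-adapted basis for the form and a generic flag and check that the intersection numbers $\dim(E_i\cap p_j)$ take the minimal possible values; these are open conditions on $c\in C_0$, so once satisfied at one point they hold on a Zariski-dense subset of $C_0$.

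The technical heart — and the step I expect to be the main obstacle — is showing that the open $G_p$-orbit really does meet $C_0$ rather than just coming close, i.e. ruling out the possibility that $C_0$ lies in the boundary divisor of relative-position strata. For this I would use the Schubert-slice machinery from $\S\ref{Schubert exhaustions}$: by the theorem on Schubert slices, for any Iwasawa decomposition $G_0=K_0A_0N_0$ there is a finite set $I\subset C_0$ of points $z$ with $A_0N_0.z$ of minimal dimension, and each such slice $\Sigma=A_0N_0.z$ meets every cycle in $\mathcal{M}_D$ transversally in exactly one point, while being open in a Schubert cell $AN.z$ of $Z$. Since $AN$ is (up to the Zariski-open set $KAN$) essentially a big cell, a Schubert cell through a point of $C_0$ is as ``generic'' as possible; translating $\Sigma$ by elements of $K_0$ moves the slice around $C_0$, and the union of these slices already covers a dense subset of $C_0$. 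Combining transversality of $\Sigma$ to cycles with the explicit description of $Q$ for $\mathrm{SL}_n(\mathbb{R})$ — where the isotropy $V_0$ and hence $Q$ is built from flags of subspaces and the parabolic has an unusually transparent structure — one sees that the Schubert cell containing a generic point of $C_0$ falls in the open $G_p$-orbit.

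Finally I would assemble the pieces: fix a generic $p\in D$; by the slice argument choose $c\in C_0$ lying in a Schubert cell that is in generic position relative to $p$; conclude $c$ is in the open $G_p$-orbit; hence $C_0$ meets the open $G_p$-orbit; by the base-point-independence remark this gives generic 1-connectivity of $D$; therefore $D$ is 1-connected as claimed. The one place where care is genuinely needed is the explicit verification that for $\mathrm{SL}_n(\mathbb{R})$ the ``generic relative position'' conditions are compatible with lying on an $\mathrm{SO}_n(\mathbb{C})$-orbit — i.e. that the orthogonality constraints coming from $K$ do not accidentally force a degenerate relative position with every $p$ — and this is exactly where a short, case-free root computation (or a direct linear-algebra argument with the symmetric form) is required, as the author indicates.
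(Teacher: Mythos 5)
Your plan reduces the proposition to exactly the assertion that has to be proved, and then leaves that assertion unproved. After setting up relative positions, the statement you must establish is that the closed $\mathrm{SO}_n(\mathbb C)$-orbit $C_0$ is not entirely contained in the union of the non-open $G_p$-strata, i.e.\ that the orthogonality constraints defining $C_0$ do not force every flag of $C_0$ into degenerate position with respect to $p$. You acknowledge this is ``the technical heart'' and close by saying that ``a short, case-free root computation (or a direct linear-algebra argument with the symmetric form) is required'' --- but that computation is the content of the proposition, and it is nowhere carried out. The Schubert-slice facts you invoke do not supply it: the theorem on slices controls intersections of cycles with $A_0N_0$-orbits, i.e.\ positions relative to Iwasawa-Borel subgroups whose fixed points lie in the \emph{closed} $G_0$-orbit, whereas generic $1$-connectivity concerns the position of $C_0$ relative to the parabolic isotropy $G_p$ at a point $p$ of the \emph{open} orbit $D$; openness of $\Sigma$ in an $AN$-Schubert cell says nothing about points of $C_0$ being in the open $G_p$-orbit. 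Moreover, nothing in your argument uses $\mathrm{SL}_n(\mathbb R)$ in an essential way (you only remark that its parabolics are ``transparent''), yet the statement is genuinely special to this situation: it fails in Hermitian settings (already for $\mathrm{SL}_2(\mathbb R)$, where $C_0$ is a point fixed by the relevant isotropy), so any correct proof must isolate what property of $\mathrm{SL}_n(\mathbb R)$ is being used.

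For comparison, the paper argues by contradiction at precisely this point: if $C_0$ misses the open $Q$-orbit, then it misses the open orbit of every Borel $B\subset Q$, hence lies in an irreducible $B$-invariant hypersurface $H$, which is the preimage of the unique $B$-invariant hypersurface $\widehat H$ in $\widehat Z=G/\widehat Q$ for a maximal parabolic $\widehat Q\supset Q$. For $\mathrm{SL}_n$ the unipotent radical of $\widehat Q$ is Abelian and acts simply transitively on the open cell of $\widehat Z$, hence with positive-dimensional ineffectivity on $\widehat H\supset\widehat C_0$; therefore the stabilizer of the base cycle $\widehat C_0$ properly contains $K=\mathrm{SO}_n(\mathbb C)$, which is simple, and this forces $\widehat D$ to be of Hermitian holomorphic type --- contradicting the fact that $\mathrm{SL}_n(\mathbb R)$ is not of Hermitian type. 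This is where the simplicity of $K$ and the non-Hermitian nature of $\mathrm{SL}_n(\mathbb R)$ enter; your proposal would need an analogous mechanism (or an honest explicit linear-algebra verification with the symmetric form) to close the gap.
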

\begin {proof}
Let $z_0\in C_0$ be the base point and assume that it
cannot be connected to some point $z\in D$ by a
cycle.  It follows that $C_0$ has empty intersection
with the open $Q$-orbit. In particular, it has empty
intersection with the open orbit of every Borel subgroup
$B$ contained in $Q$.  Thus $C_0$ is contained in
an irreducible $B$-invariant complex hypersurface $H$ in $Z$.  
Given such a hypersurface there is a maximal parabolic
subgroup $\widehat Q$ containing $Q$ such that $H$ is
the preimage of the unique $B$-invariant hypersurface $\widehat H$
in $\widehat Z:=G/\widehat Q$ by the projection $G/Q\to G/\widehat Q$.

We may assume that the unipotent radical $U:=R_u(\widehat Q)$
is contained in $B$ and note that it is Abelian and acts
freely and transitively on the open $B$-orbit in $\widehat Z$.
Thus it acts with 1-dimensional ineffectivity on $\widehat H$.
By construction the base cycle $\widehat C_0$ is contained
in $\widehat H$.  Thus the stabilizer of $\widehat C_0$
in $G$ acts on $\widehat C_0$ with nontrivial ineffectivity.
On the other hand $K=\mathrm {SO}_n(\mathbb C)$ is a 
simple Lie group and consequently this stabilizer contains
$K$ as a proper subgroup.

As a result the domain $\widehat D=G_0.\widehat z_0$ is
of Hermitian holomorphic type (see $\S5$ in \cite {FHW})
and in particular $G_0$ is of Hermitian type, a contradiction.
\end {proof}
The reader will note that, except for the fact that we
use the simplicity of $K$, the discussion in the above 
proof is completely general. However, even in the case
where $K$ is not simple, we would expect that it still 
would be possible to reduce to the Hermitian holomorphic
case.  A concrete example of this is the action of 
$G_0=\mathrm {SO}(3,19)$ on the 20-dimensional quadric
$Z=G/Q$ which is also 1-connected.  The flag domain $D$ 
of positive lines in $Z$ is the moduli space of marked 
K3-surfaces, an example of interest to Andreotti. It also
should be mentioned that if stronger conditions are imposed,
then fine classification results can be proved.  For example,
for the case of strong pseudocavity see (\cite {HS1,HS2}).
\begin {thebibliography} {XXX}
\bibitem [A] {A}
Andreotti, A.: 
Th\'eor\`emes de d\'ependance alg\'ebrique sur les espaces
complexes pseudo-concaves, Bull. Soc. Math. France {\bf 91}
(1963) 1-38
\bibitem [AGh] {AGh}
Andreotti, A. and Gherardelli, F.:
Some remarks on quasi-abelian manifolds, Global analysis and
its applications (Lectures, Internat. Sem. Course, Internat. Centre
Theoret.Phy. Trieste, 1972), Vol. II. Internat. Atomc Energy Agency,
Vienna (1974) 203-206 
\bibitem [AGr1] {AGr1}
Andreotti, A. and Grauert, H.:
Algebraische K\"orper von automorphen Funktionen,
Nachr. Akad. Wiss. G\"ottingen Math.-Phys. Kl.II {\bf 1961} (1961)
39-48
\bibitem [AGr2] {AGr2}
Andreotti, A. and Grauert, H.:
Th\'eor\`eme de finitude pour la cohomologie des espaces complexes,
Bull.Soc.Math.France {\bf 90} (1962) 193-259
\bibitem [AH] {AH}
Andreotti, A. and Huckleberry, A.:
Pseudoconcave Lie groups, 
Compositio Math. 25 (1972), 109-115
\bibitem [AN1] {AN1}
Andreotti, A. and Norguet, F.:
La convexit\'e holomorphe dans l'espace analytique des
cycles d'une vari\'et\'e alge\'ebrique. Ann. Scuola Norm. Sup. Pisa
(3) {\bf 21} (1967) 31-82
\bibitem [AN2] {AN2}
Andreotti, A. and Norguet, F.:
Cycles of algebraic manifolds and $\partial \bar\partial $-cohomology,
Ann. Scuola Norm. Sup. Pisa (3) {\bf 25} (1971) 59-114
\bibitem [AK] {AK}
Abe, Y. and Kopfermann, K.:
Line bundles, cohomology and quasi-abelian, 
Lecture Notes in Mathematics 1759, Springer-Verlag Berlin (2001)
\bibitem [Ba] {Ba}
Barlet, D.:
Espace analytique r\'eduit des cycles analytiques
complexes compacts d'un espace analytique complexe de
dimension finie, `` Fonctions de plusieurs variables
complexes'', II (S\'em. Fran\c cois Norguet, 1974--1975),
Springer Lecture Notes in Math. {\bf 482} (1975), 1--158.
\bibitem [Be] {Be}
Berteloot, F.:
Fontions plurisousharmoniques sur $\mathrm{SL}(2,\mathbb C)$
invariantes par un sous-groupe monog\`ene,
J. Analyse Math. {\bf 48} (1987) 267-276
\bibitem [BeO] {BeO}
Berteloot, F. and Oeljeklaus, K.:
Invariant plurisubharmonic functions and hypersurfaces on semisimple
complex Lie groups, Math. Ann. {\bf 281} no. 3 (1988) 513-530
\bibitem [Bo] {Bo} 
Borel, A.:
Pseudo-concavit\'e et groupes arithme\'etiques,
Essays on Topology and Related Topics (M\'emoires d\'edi\'es \`a
Georges de Rham), Springer, New York (1970) 70-84
\bibitem [C] {C}
Cousin, P.:
Sur les fonctions p\'eriodique, Ann. Sci. \'Ecole Norm. Sup.
(3) {\bf 19} (1902) 9-61
\bibitem [FHW] {FHW}
Fels, G., Huckleberry, A. and Wolf, J.~A.: Cycles Spaces of
Flag Domains: A Complex Geometric Viewpoint, 
Progress in Mathematics, Volume 245, Springer/Birkh\"auser Boston, 
2005
\bibitem [GH] {GH}
Gilligan, B. and Huckleberry, A.:
On non-compact complex nil-manifolds,
Math. Ann. 238 (1978), 39-49
\bibitem [HI] {HI}
Heinzner, P. and Iannuzzi, A.:
Integration of local actions on holomlorphic fiber spaces,
Nagoya Math. J. {\bf 146} (1997) 31-53
\bibitem [HoH] {HoH}
Hong, J. and Huckleberry, A.:
On closures of cycle spaces of flag domains, 
Manuscripta Math., {\bf 121} (2006) 317-327 
\bibitem [HN] {HN}
Huckleberry, A. and Nirenberg, R.:
On $k$-pseudoflat complex spaces,
Math. Ann. 200 (1973),l 1-10
\bibitem [HS1] {HS1}
Huckleberry, A. and Snow, D.: 
Pseudoconcave Homogeneous Manifolds,
Ann. Scuola Norm. Sup. Pisa 7 (1980), 29-54
\bibitem [HS2] {HS2}
Huckleberry, A. and Snow, D.: 
A Classification of Strictly
Pseudoconcave Homogeneous Manifolds, 
Ann. Scuola Norm. Sup. Pisa 8 (1981), 231-255
\bibitem [HSt] {HSt}
Huckleberry, A. and Stoll, W.:
On the Thickness of the Shilov boundary,
Math. Ann. 207 (1974), 213-231
\bibitem [HW] {HW}
Huckleberry, A. and Wolf, J. A.:
Cycle space constructions for exhaustions of flag domains 
(To appear in Ann. Scuola Norm. Pisa {\bf 9{3}} (2010) arXiv:0807.2062)
\bibitem [K] {K}
Kopfermann, K.:
Maximale Untergruppen Abelscher komplexer Liescher Gruppen,
Schr. Math. Inst. Univ. M\"unster {\bf 29} (1964)
\bibitem [L] {L}
Loeb, J.-J.:
Action d`une forme r\'eelle d´un groupe de Lie complex sur les
fonctions plurisousharmoniques, Ann. Inst. Fourier (Grenoble)
{\bf 35} no. 4 (1985) 59-97
\bibitem [LOR] {LOR}
Loeb, Jean-Jacques, Oeljeklaus, K. and Richthofer, W.:
A decomposition theorem for complex nilmanifolds,
Canad. Math. Bull {\bf 30} (1987) no.3. 377-378
\bibitem [M] {M}
Matsushima, Y.:
Espace homoge\`enes de Stein der groupes de Lie complexes,
Nagoya Math. J. {\bf 16} (1960) 205-218
\bibitem [MM] {MM}
Matsushima, Y. and Morimoto, A.:
Sur certains espaces fibr\'es holomorphes sur une vari\'et\'e de
Stein, Bull. Soc. Math. France {\bf 88} (1960) 137-155
\bibitem [Mo] {Mo}
Morimoto, A.:
Non-compact complex Lie groups without non-constant holomorphic
functions, Proc. Conf. Complex Analysis (Minneapolis), Springer,
Berlin (1964) 256-272
\bibitem [S] {S}
Schmid, W.: 
Homogeneous complex manifolds and representations of semisimple Lie groups,
thesis, University of California at Berkeley, 1967.
\bibitem [SW] {SW}
Schmid, W. and Wolf, J. A.:
A vanishing theorem for open orbits on complex flag
manifolds, Proc. Amer. Math. Soc. {\bf 92} (1984), 461--464.
\bibitem [W] {W}
Wolf, J. A.:
The action of a real semisimple Lie group on a complex
manifold, {\rm I}: Orbit structure and holomorphic arc components,
Bull. Amer. Math. Soc. {\bf 75} (1969), 1121--1237.
\end {thebibliography}
\end {document}